\documentclass[12pt,a4paper]{amsart}
\usepackage{amsmath,amsthm,amsfonts,amssymb,latexsym,mathrsfs}




\usepackage{url}


 \parindent=16pt

\usepackage[left=3cm,right=4cm,top=1.5cm,bottom=1.5cm]{geometry} 
\usepackage{enumerate}

\newtheorem{theorem}{Theorem}[section]
\newtheorem{proposition}[theorem]{Proposition}
\newtheorem{lemma}[theorem]{Lemma}
\newtheorem{claim}[theorem]{Claim}

\newtheorem{corollary}[theorem]{Corollary}
\newtheorem{observation}[theorem]{Observation}
\theoremstyle{definition}

\newtheorem{question}[theorem]{Question}

\newcommand{\U}{\mathcal U}
\newcommand{\w}{\omega}

\newcommand{\IQ}{\mathbb Q}

\newcommand{\IP}{\mathbb P}

\newcommand{\B}{\mathcal{B}}
\newcommand{\C}{\mathcal{C}}

\newcommand{\A}{\mathcal{A}}
\newcommand{\G}{\mathcal{G}}
\newcommand{\I}{\mathcal{I}}

\newcommand{\X}{\mathcal{X}}
\newcommand{\Y}{\mathcal{Y}}

\newcommand{\F}{\mathcal{F}}

\newcommand{\V}{\mathcal{V}}

\newcommand{\bb}{\mathfrak b}

\newcommand{\uhr}{\upharpoonright}

\newcommand{\name}[1]{\dot{#1}}
\newcommand{\la}{\langle}
\newcommand{\ra}{\rangle}

\newcommand{\hot}{\mathfrak}

\newcommand{\nothing}[1]{}


\let\olduparrow\uparrow
\renewcommand{\uparrow}{\mathop{\olduparrow}}

\title[Mathias forcing for filters]{Mathias forcing and
combinatorial covering properties of filters}

\author{David Chodounsk\'y, Du\v{s}an Repov\v{s}, and Lyubomyr Zdomskyy}

\address{Institute of Mathematics of the Academy of Sciences of the Czech Republic,
\v{Z}itn\'{a}~25, Praha~1, Czech Republic}
 \email{david.chodounsky@matfyz.cz}

\address{Faculty of Education, and Faculty of Mathematics and Physics,
University of Ljubljana, P. O. Box 2964, Ljubljana, Slovenia 1001.}
\email{dusan.repovs@guest.arnes.si}
\urladdr{http://www.fmf.uni-lj.si/\~{}repovs/index.htm}

\address{Kurt G\"odel Research Center for Mathematical Logic,
University of Vienna, W\"ahringer Stra\ss e 25, A-1090 Wien,
Austria.}
\email{lzdomsky@gmail.com}
\urladdr{http://www.logic.univie.ac.at/\~{}lzdomsky/}

\subjclass[2010]{Primary: 54D20, 03E40. Secondary: 54H05, 03E05.}
\keywords{Menger space, Hurewicz space, $\gamma$-space, filter,
ideal, Mathias forcing.}

\thanks{The first author was partially supported by the grant IAA100190902 of GA AV \v{C}R and  RVO: 67985840.
The second author was supported by the Slovenian Research Agency grants
P1-0292-0101 and J1-5435-0101.
The third author would
like to thank the Austrian Academy of Sciences (APART Program) as
well as the Austrian Science Fund FWF (Grant I 1209-N25)
 for generous support for this research.
 The collaboration of the first and the third author was partially supported by the
Czech Ministry of Education grant 7AMB13AT011: Combinatorics and Forcing.}

\begin{document}
\begin{abstract}
We give  topological characterizations of filters $\F$
on $\w$ such that the Mathias forcing $\mathbb M_\F$ adds no
dominating  reals or preserves ground model unbounded families. This
allows us to answer some questions of Brendle, Guzm\'an,
Hru\v{s}\'ak, Mart\'{\i}nez,  Minami, and Tsaban.
\end{abstract}

\maketitle

\section{Introduction}

A subset $\F$ of $[\w]^{\w}$ is called a \emph{filter} if $\F$
contains all co-finite sets, is closed under
finite intersections of its elements, and under taking supersets.
Every filter $\F$ gives rise  to a natural forcing notion $\mathbb M_\F$
introducing a generic subset $X\in [\w]^\w$ such that $X\subset^* F$
for all $F\in\F$ as follows: $\mathbb M_\F$ consists of pairs
$\la s,F\ra$ such that $s\in [\w]^{<\w}$, $F\in\F$, and $\max s<\min F$.
A condition $\la s,F\ra$ is stronger than $\la t,G\ra$
if $F\subset G$, $s$ is an end-extension of $t$, and
$s\setminus t\subset G$. $\mathbb M_\F$ is usually called \emph{Mathias forcing
associated with} $\F$.

Posets of the form $\mathbb M_\F$
are important in the set theory of reals and  have been
used to establish various consistency results, see, e.g., \cite{Can88, HruMin??}
and references therein. One of the most fundamental questions about
$\mathbb M_\F$ is whether it adds a dominating real, i.e.,
whether in $\w^\w$ of the generic extension $V^{\mathbb M_\F}$ there exists $x$
such that for every $a\in \w^\w $ in the ground model $V$ the inequality $a(n)\leq x(n)$
holds for all but finitely many $n$. Such filters $\F$ admit the following
 topological characterization proved in section 2.

\begin{theorem} \label{main}
Let $\F$ be a filter. Then $\mathbb M_\F$ does not add dominating reals
 if and only if $\F$ has the  Menger covering property as a subspace of $\mathcal P(\w)$.
\end{theorem}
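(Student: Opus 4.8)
The plan is to translate the statement ``$\mathbb M_\F$ adds no dominating real'' into a selection principle on the space $\F\subseteq\mathcal P(\w)$ and to match the latter with $\mathsf{S}_{\mathrm{fin}}(\mathcal O,\mathcal O)$, using that a condition of $\mathbb M_\F$ is a pair (finite stem, filter tail): the finite stems play the role of basic finite approximations, while the points being covered are the sets $A\in\F$ serving as tails. Since $\mathcal P(\w)$ is zero-dimensional I may work throughout with clopen covers, and I will invoke the classical theorem of Hurewicz that a separable metrizable space is Menger if and only if each of its continuous images in $\w^\w$ is non-dominating. I prove the two implications separately; the forward one is a direct name analysis, while the converse manufactures a dominating real from a witness to non-Mengerness.

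For ``Menger $\Rightarrow$ no dominating real'', fix $\langle s,F_0\rangle\forces\dot x\in\w^\w$; I must produce $g\in\w^\w\cap V$ and extensions forcing $\dot x(n)\leq g(n)$ for infinitely many $n$, which shows that no condition can force $\dot x$ to be dominating. Enumerate as $\langle t_j:j\in\w\rangle$ all admissible stems (those $t\supseteq s$ with $t\setminus s\subseteq F_0$). For each $j$ and each $n$ let $\mathcal C^j_n$ be the family of finite sets $\tau$ with $\max t_j<\min\tau$ for which $\langle t_j\cup\tau,B\rangle\forces\dot x(n)\leq m$ holds for some $B\in\F$ and some $m$, and let $m^j_n(\tau)$ be the least such $m$. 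By density of deciding $\dot x(n)$, the clopen sets $\{[\tau]:\tau\in\mathcal C^j_n\}$, where $[\tau]=\{A:\tau\subseteq A\}$, cover the subspace $\F^j=\{A\in\F:\min A>\max t_j,\ A\subseteq F_0\}$, which is closed in $\F$ and hence Menger. Splitting $\w$ into infinitely many infinite pieces and applying $\mathsf{S}_{\mathrm{fin}}(\mathcal O,\mathcal O)$ on each piece, I obtain finite $\mathcal D^j_n\subseteq\mathcal C^j_n$ such that every $A\in\F^j$ contains a member of $\mathcal D^j_n$ for infinitely many $n$. Now set $g(n)=\max\{m^j_n(\tau):j\leq n,\ \tau\in\mathcal D^j_n\}$. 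Given any $\langle t,F\rangle\leq\langle s,F_0\rangle$ and any $N$, write $t=t_j$; since $F\in\F^j$ there is some $n>\max(N,j)$ with $\tau\in\mathcal D^j_n$ and $\tau\subseteq F$. Then $\langle t_j\cup\tau,(B\cap F)\setminus(\max\tau+1)\rangle$ is a common extension of $\langle t,F\rangle$ and the witnessing $\langle t_j\cup\tau,B\rangle$, and it forces $\dot x(n)\leq m^j_n(\tau)\leq g(n)$. Thus $\langle s,F_0\rangle\forces\exists^\infty n\,\dot x(n)\leq g(n)$, so $g$ is not dominated.

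For ``not Menger $\Rightarrow$ adds a dominating real'', I start from a failure of the selection principle. Using Hurewicz's theorem, and arranging the witnessing covers to be increasing and upward closed (clopen sets $[\tau]$), I obtain a continuous map $\psi:\F\to\w^\w$ that is antitone with respect to $\subseteq$ and whose image is dominating; concretely, enumerating the $n$-th cover as $\{[\sigma^n_k]:k\in\w\}$ one puts $\psi(A)(n)=\min\{k:\sigma^n_k\subseteq A\}$, which is antitone precisely because the $[\sigma^n_k]$ are upward closed, and whose image dominates because no finite subselection covers $\F$. The generic furnishes a $\subseteq$-decreasing sequence of tails $F_0\supseteq F_1\supseteq\cdots$ whose stems exhaust the generic real and which is downward cofinal in $\F$, since for each $F\in\F$ the conditions with tail $\subseteq F$ are dense. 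Put $\dot d(n)=\psi(F_n)(n)$. For $g\in V$ choose $A\in\F$ with $\psi(A)>g$ pointwise; once $F_n\subseteq A$, antitonicity gives $\dot d(n)=\psi(F_n)(n)\geq\psi(A)(n)>g(n)$, so $\dot d\geq^* g$. As $g$ was arbitrary, $\dot d$ is dominating.

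The main obstacle is the mismatch between the small generic real, which is almost contained in every $F\in\F$, and the large sets $A\in\F$ on which the covering property is measured. In the forward direction this is what forces the decisions of $\dot x(n)$ to be organized separately over every possible stem $t_j$ and then combined through the diagonal definition of $g$, since $\mathbb M_\F$ admits no pure decision of names on the stem alone. In the converse it is why the dominating real must be read off the decreasing filter tails rather than off the generic real itself, and hence why one needs $\psi$ to be antitone. Producing such an antitone continuous map with dominating image from a mere failure of Mengerness---equivalently, showing that the witnessing covers may be taken upward closed, which is where the upward-closedness of $\F$ is essential---is the step I expect to require the most care.
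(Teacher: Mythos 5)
Your argument is correct, but it follows a genuinely different route from the paper's. The paper never analyzes names at all in its Section~2: it translates the Menger property of $\F$ into a combinatorial statement about the ideal $\I^{<\w}$ on $[\w]^{<\w}$ (Claims~\ref{clear_cover_1}--\ref{trivial_but_funny} show that $\I$ is Menger iff $\I^{<\w}$ is a $P^+$-ideal) and then quotes \cite[Theorem~3.8]{HruMin??}, which characterizes the filters $\F=\I^*$ for which $\mathbb M_\F$ adds no dominating real as exactly those with $\I^{<\w}$ a $P^+$-ideal. Your proof is instead the direct forcing argument that the authors only allude to in Remark~4.2 (``Theorem~\ref{main} can be proved directly using the ideas of the proof of Theorem~\ref{main2}''), and it checks out. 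In the forward direction, your covers $\{\uparrow\tau\colon\tau\in\mathcal C^j_n\}$ of the closed (hence Menger) subspaces $\F^j$, the standard splitting of $\w$ into infinitely many infinite pieces to obtain witnesses at infinitely many $n$, and the diagonal $g(n)=\max\{m^j_n(\tau)\colon j\le n,\ \tau\in\mathcal D^j_n\}$ together give the slightly stronger conclusion that the starting condition itself forces $\exists^\infty n\,\dot x(n)\le g(n)$ (replace $g$ by $g+1$ to refute $\le^*$-domination); your diagonalization over all admissible stems $t_j$ replaces the trick used in the paper's proof of Theorem~\ref{main2}, where $s^x$ and $n^x$ are stabilized by a covering argument. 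In the converse, your antitone map $\psi(A)(n)=\min\{k\colon\sigma^n_k\subseteq A\}$ with pointwise dominating image, evaluated along a decreasing sequence of generic tails that is $\subseteq$-cofinal in $\F\cap V$, is a correct repackaging of the second half of the proof of Theorem~\ref{main2}, where $x_F(n)=\min\{m\colon F\in\uparrow q_m(n)\}$ and the generic pseudointersection yield the dominating real. The one step you assert rather than prove---that the witnessing covers may be replaced by covers by upward-closed sets while preserving the failure of the selection principle---is exactly the paper's Claim~\ref{clear_cover_1} together with the compactness of each $\uparrow q$ (Corollary~\ref{menger_for_filters}): each refined set $\uparrow q$ is contained in a finite union from the original cover, so finite selections transfer back; this is short but should be written out. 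As for what each approach buys: the paper's reduction is quicker given the cited result and isolates the combinatorial content ($P^+$-ness of $\I^{<\w}$), which is reused in the list of equivalences closing its Section~3, while yours is self-contained, avoids any appeal to \cite{HruMin??}, and localizes the non-domination conclusion below an arbitrary condition.
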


Recall from \cite{COC2} that a topological space $X$ has the
\emph{Menger covering property} (or simply is Menger), if for every sequence $\la \U_n\colon
n\in\omega\ra$ of open covers of $X$ there exists
a sequence $\la \V_n\colon  n\in\omega\ra$ such that $\V_n\in [\U_n]^{<\w}$ and
$\left\{\bigcup\V_n\colon  n\in\omega\right\}$ is a cover  of  $X$.
Menger spaces can be equivalently characterized as spaces $X$ such that
no image of $X$ via a continuous function from $X$ to $\omega^\omega$ is $<^*$-dominating.

Theorem~\ref{main} has a number of applications. For instance,
since   analytic Menger sets of reals are
$\sigma$-compact  \cite{Ark86},
it implies the following
fact\footnote{While completing this
manuscript we have learned that Corollary~\ref{answer1} for Borel filters
 has been independently
obtained in \cite{GuzHruMar??}.} answering \cite[Question~4.3]{HruMin??} in the negative.

\begin{corollary}\label{answer1}
Let $\F$ be an analytic filter on $\w$. Then $\mathbb M_{\F}$ does
not add a dominating real if and only if $\F$ is $\sigma$-compact.
\end{corollary}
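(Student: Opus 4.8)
The plan is to derive the corollary directly from Theorem~\ref{main} by reducing the forcing statement to a purely topological dichotomy for analytic spaces. By Theorem~\ref{main}, the assertion that $\mathbb M_\F$ adds no dominating real is equivalent to $\F$ being Menger as a subspace of $\mathcal P(\w)$. Thus it suffices to show that, for an analytic filter $\F$, the subspace $\F\sbst\mathcal P(\w)$ is Menger if and only if it is $\sigma$-compact.

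For the implication from Menger to $\sigma$-compact, I would first observe that $\mathcal P(\w)$ is homeomorphic to the Cantor space $2^\w$ via the characteristic-function identification, and $2^\w$ embeds as a closed subspace of $\IR$; under this identification an analytic subset of $\mathcal P(\w)$ becomes an analytic set of reals, while the Menger property is preserved, being a topological invariant. Consequently $\F$ is an analytic Menger set of reals, and the cited theorem of \cite{Ark86} --- that every analytic Menger set of reals is $\sigma$-compact --- yields that $\F$ is $\sigma$-compact.

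The converse is the elementary direction. Every $\sigma$-compact space is Menger: writing $\F=\bigcup_{n\in\w}\K_n$ with each $\K_n$ compact and given a sequence $\la\U_n\colon n\in\w\ra$ of open covers, compactness lets me choose $\V_n\in[\U_n]^{<\w}$ covering $\K_n$, so that $\left\{\bigcup\V_n\colon n\in\w\right\}$ covers $\F$. Hence a $\sigma$-compact $\F$ is Menger, and Theorem~\ref{main} gives that $\mathbb M_\F$ adds no dominating real.

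The substantive content lies entirely in the two imported results --- Theorem~\ref{main} and the dichotomy of \cite{Ark86} --- so I do not expect a genuine obstacle in assembling the argument. The only point demanding care is verifying that passing from $\mathcal P(\w)$ to a subspace of the reals preserves both analyticity and the Menger property; this is routine, since the characteristic-function map is a homeomorphism onto a Polish subspace and hence a Borel isomorphism carrying analytic sets to analytic sets.
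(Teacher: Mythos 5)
Your proposal is correct and follows essentially the same route as the paper: Theorem~\ref{main} reduces the forcing statement to the Menger property, the Arkhangel'ski\u{\i} result from \cite{Ark86} gives that analytic Menger sets of reals are $\sigma$-compact, and the converse is the trivial observation that $\sigma$-compact spaces are Menger. The extra verifications you flag (the identification of $\mathcal P(\w)$ with $2^\w$ and the preservation of analyticity and the Menger property under this homeomorphism) are indeed routine and are taken for granted in the paper.
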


Several additional  applications of Theorem~\ref{main} will be
presented in Section~\ref{applications_menger}.

Following \cite{GerNag82}  we say that a family  $\U$ of subsets of
a  set $X$ is
\begin{itemize}
\item an \emph{$\omega$-cover}, if $X\not\in \U$
and for every finite subset $K$ of $X$ there exists $U\in\U$ such
that $K\subset U$;
\item a \emph{$\gamma$-cover}, if for every $x\in X$ the family
$\{U\in \U\colon x\not\in U\}$ is finite.
\end{itemize}
The Hurewicz (resp. Scheepers\footnote{In \cite{COC1} this property is denoted by
$U_{\mathit{fin}}(\mathcal O,\Omega)$. The name ``Scheepers
property'' was suggested by Banakh and by now seems to have become quite
standard.}) property is defined in the same way as the Menger one,
the only difference being that  the family $\left\{\bigcup\V_n\colon n\in\w\right\}$ must be a
$\gamma$-cover (resp. $\w$-cover) of $X$.

We say that a poset $\IP$ is \emph{almost $\w^\w$-bounding}
if for every $\IP$-name $\dot f$ for a real and $q \in \IP$,
there exists $g\colon \omega \to \omega$ such that for every
$A \in {[\omega]}^\omega$ there is $q_A \leq q$ such that
$q_A \Vdash g \restriction A \not<^* \dot{f}\restriction A$.
It is well known that almost $\w^\w$-bounding posets preserve
unbounded families of reals of the ground model as unbounded
families in the generic extension. This was observed by
Shelah in~\cite{shelah-alm_bound}. The following lemma shows
that this property in fact characterizes almost $\w^\w$-bounding
posets. We are not aware of this fact having been mentioned in the literature before.



\begin{lemma}
    A poset $\mathbb P$ is almost $\w^\w$-bounding if and only if $\mathbb P$
    preserves all unbounded families of the ground model as unbounded families
    in the extension.
\end{lemma}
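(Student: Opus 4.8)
The plan is to establish the two implications separately. The forward direction is essentially Shelah's observation and is the easy half; the reverse implication carries the new content, and I would prove it by contraposition.

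For the forward direction I would argue by contradiction. Let $B\in V$ be unbounded and suppose some $q\in\mathbb P$ and some name $\dot h$ satisfy $q\Vdash b<^*\dot h$ for every $b\in B$. Applying the almost $\w^\w$-bounding property to $\dot h$ and $q$ yields a ground model $g$ as in the definition. Since $B$ is unbounded in $V$, the function $g$ does not dominate $B$, so there is $b\in B$ with $b\not<^* g$; put $A=\{n\colon b(n)\geq g(n)\}\in[\w]^\w$. Feeding this $A$ into the definition produces $q_A\leq q$ with $q_A\Vdash g\restriction A\not<^*\dot h\restriction A$, that is, $q_A$ forces $g(n)\geq \dot h(n)$ for infinitely many $n\in A$. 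For each such $n$ we have $b(n)\geq g(n)\geq\dot h(n)$, whence $q_A\Vdash b\not<^*\dot h$, contradicting $q_A\leq q\Vdash b<^*\dot h$.

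For the reverse direction I would show that if $\mathbb P$ is not almost $\w^\w$-bounding then it fails to preserve some unbounded family. First I would unwind the negation of the definition. Fix $\dot f$ and $q$ witnessing the failure; then for every $g\in V\cap\w^\w$ there is $A_g\in[\w]^\w$ such that no $q'\leq q$ forces $g\restriction A_g\not<^*\dot f\restriction A_g$. Since $q$ forces the negation of a statement exactly when no extension of $q$ forces it, this rephrases as $q\Vdash g\restriction A_g<^*\dot f\restriction A_g$. Replacing $\dot f$ by $\dot f+1$ I may additionally assume $q\Vdash\dot f(n)\geq 1$ for all $n$, which does not disturb the previous line.

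The heart of the argument is a single truncation construction. For each $h\in V\cap\w^\w$ choose $A_h$ as above and define $b_h\in V\cap\w^\w$ by $b_h\restriction A_h=h\restriction A_h$ and $b_h(n)=0$ for $n\notin A_h$. On $A_h$ the choice of $A_h$ gives $q\Vdash b_h(n)<\dot f(n)$ for all but finitely many $n\in A_h$, while off $A_h$ we have $b_h(n)=0<1\leq\dot f(n)$, so $q\Vdash b_h<^*\dot f$; on the other hand $b_h$ agrees with $h$ on the infinite set $A_h$, so $b_h\not<^* h$. Hence $B=\{b_h\colon h\in V\cap\w^\w\}$ is unbounded in $V$ (each candidate bound $h$ is escaped by $b_h$), while $q\Vdash b<^*\dot f$ for every $b\in B$; since $B$ is coded in the ground model, forcing a $<^*$-bound on each of its members forces a bound on $B$ itself, so $q\Vdash B$ is bounded and $B$ is not preserved. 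I expect the truncation to be the one genuinely new idea: cutting $h$ down to $A_h$ buys everywhere-eventual domination by $\dot f$ while keeping infinitely many agreements with $h$, and using the single name $\dot f$ as a common bound converts local domination into a global failure of preservation. The remaining points (the characterization of $q\Vdash\neg(\cdot)$ used above, and deciding below $q$ that a name forced into $\check B$ equals some $\check b$) are routine forcing bookkeeping.
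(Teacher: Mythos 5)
Your proof is correct and takes essentially the same route as the paper's: the truncation construction for the hard direction (your $b_h$, equal to $h$ on $A_h$ and $0$ off it, is exactly the paper's $g'$), and in the easy direction the same choice of the infinite set $A=\{n\colon g(n)<h(n)\}$ followed by an appeal to the condition $q_A$. The only differences are cosmetic: you run the easy direction by contradiction (which requires extracting a single name $\dot h$ for the bound, as you note), and your $\dot f+1$ adjustment is a harmless substitute for the paper's use of $\leq^*$ in place of $<^*$.
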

\begin{proof}
    Suppose that $\mathbb P$ is not almost $\w^\w$-bounding.
    There is a name $\dot f$ for a real and $p \in \IP$, a condition such that for all
    $g \in \w^\w$ there is an infinite set
    $A_g \in [\w]^\w$ such that $p \Vdash g\restriction A_g \leq^* \dot f\restriction A_g$.
    For every $g \in \w^\w$ define $g'(n) = g(n)$ if $n \in A_g$, and $g'(n) = 0$ otherwise.
    The set $X = \left\{g'\colon g \in \w^\w\right\}$ is an unbounded set of reals,
    and the condition $p$ forces  $X$ to be bounded by $\dot{f}$ in the extension.

    Suppose that $\mathbb P$ is almost $\w^\w$-bounding and
    let $X$ be an unbounded set of reals.
    Let $\dot f$ be a name for a real and $q \in \mathbb P$  a condition.
    Find $g\colon \omega \to \omega$ as in the definition of an almost $\w^\w$-bounding
    forcing. Since $X$ is unbounded, there is $h \in X$ such that
    $A = \{n \in \omega : g(n) < h(n) \}$ is infinite.
    Now $q_A \leq q$ forces $h$ to be not dominated by $\dot{f}$.
\end{proof}

The following theorem is the main result of section~\ref{444}.

\begin{theorem} \label{main2}
Let $\F$ be a filter. Then $\mathbb M_\F$ is almost $\w^\w$-bounding 
 if and only if $\F$ has the Hurewicz property.
\end{theorem}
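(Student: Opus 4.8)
The plan is to follow the template of Theorem~\ref{main}, replacing the Menger property by the Hurewicz property and, correspondingly, ``no continuous image in $\w^\w$ is $<^*$-dominating'' by ``every continuous image in $\w^\w$ is $\leq^*$-bounded by a single function.'' Two inputs organize the argument. By the Lemma just proved, it is equivalent to show that $\mathbb M_\F$ preserves every ground-model unbounded family as an unbounded family precisely when $\F$ is Hurewicz. And, as the Hurewicz analogue of the characterization of Menger recalled right after Theorem~\ref{main}, I will use that a (separable metrizable) space is Hurewicz if and only if each of its continuous images in $\w^\w$ admits one common $\leq^*$-bound. The guiding point is that the $\gamma$-cover clause separating Hurewicz from Menger is exactly what converts the ``a separate witness for each function'' of the non-dominating case into one uniform bound, and it is this single bound that will supply the single $g$ demanded in the definition of an almost $\w^\w$-bounding poset.

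The technical heart, imported from the machinery developed for Theorem~\ref{main}, is a continuous reading of $\mathbb M_\F$-names. Given a name $\dot f$ for an element of $\w^\w$ and a condition $\la s,F\ra$, I would shrink the side condition to some $\la s,G\ra$ (with $G\in\F$, $G\sbst F$) along which $\dot f$ is computed continuously from the generic, and record this as a continuous map $\Phi_{\dot f}\colon\F\uhr G\to\w^\w$ whose value at $F'$ majorizes the numbers that $\dot f(n)$ can be forced to equal by conditions $\la t,G'\ra$ whose stem $t$ is drawn from $s\cup F'$ and whose tail satisfies $G'\sbst G$. The subtle feature of Mathias forcing is that the stem is \emph{not} constrained by $\F$ (this is exactly why $\mathbb M_\F$ adds an unbounded real), so the reading must isolate the finitely many values pinned down by the free stem from the behaviour governed by the $\F$-controlled tail; arranging $\Phi_{\dot f}$ to be genuinely continuous while still faithfully recording the influence of the tail is the \textbf{main obstacle}, and is where the combinatorics specific to $\mathbb M_\F$ enter.

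Granting the reading, the direction ``$\F$ Hurewicz $\Rightarrow\mathbb M_\F$ almost $\w^\w$-bounding'' proceeds as follows. Apply the Hurewicz characterization to $\Phi_{\dot f}$ to fix a single $g\in\w^\w$ with $\Phi_{\dot f}(F')\leq^* g$ for every $F'$. Given an arbitrary infinite $A$, I construct $q_A=\la s,G_A\ra\leq\la s,G\ra$ by a fusion-type recursion that, exploiting $\Phi_{\dot f}(G_A)\leq^* g$, thins $G$ relative to $A$ so that along the generic the tail forces $\dot f(n)\leq g(n)$ for infinitely many $n\in A$; this yields $q_A\Vdash g\uhr A\not<^*\dot f\uhr A$, as required. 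The essential gain over the Menger case is that the very same $g$ serves every adversarial $A$, because the bound extracted from the $\gamma$-cover is uniform over all $F'$.

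For the converse I argue directly, assuming $\mathbb M_\F$ almost $\w^\w$-bounding and proving that $\F$ is Hurewicz. Starting from an arbitrary sequence $\la\U_n\ra$ of open covers of $\F$, I run the reading in reverse to build a name $\dot f$ whose value $\dot f(n)$ codes, along the generic, a member of $\U_n$ covering the relevant part of the filter. Almost $\w^\w$-bounding produces one $g$ with its defining $q_A$-property; letting $\V_n\sbst\U_n$ be the finite subfamily indexed below $g(n)$ gives the required finite selections. The upgrade from ``cover'' to ``$\gamma$-cover'' is forced by the adversarial quantifier over $A$: if some $F\in\F$ lay outside $\bigcup\V_n$ for infinitely many $n$, those indices would form an infinite set $A$ on which the conclusion $q_A\Vdash g\uhr A\not<^*\dot f\uhr A$ must fail for the name coding $F$, contradicting almost $\w^\w$-boundedness. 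Thus every $F$ is caught by $\bigcup\V_n$ for all but finitely many $n$, i.e.\ $\{\bigcup\V_n\}$ is a $\gamma$-cover, so $\F$ is Hurewicz. Making the reverse reading precise—so that the adversarial $A$ genuinely corresponds to testing arbitrary infinite sets of stages—is the counterpart of the same obstacle, and once both directions of the name/cover correspondence are in place the theorem follows by matching the single witness $g$ with the single $\leq^*$-bound coming from the $\gamma$-cover.
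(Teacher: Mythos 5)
Your converse direction (almost $\w^\w$-bounding $\Rightarrow$ Hurewicz) is essentially workable and close in spirit to the paper's, modulo one step you do not mention: after obtaining $q_A=\la s,H\ra$ for the bad index set $A=\{n\colon F\notin\bigcup\V_n\}$, the values of $\name{f}$ say nothing about $F$ until you strengthen $q_A$ to a condition whose side part lies inside $F$, e.g.\ $\la s,(H\cap F)\sm(\max s+1)\ra$; this forces the generic tail into $F$, and then any $n\in A$ with $n>\max s$ and $\name f(n)\leq g(n)$ gives the contradiction. There is one name $\name f$ here, not a ``name coding $F$'' for each $F$. (The paper instead routes this direction through the Lemma: from a failure of Hurewicz witnessed by covers $\U_n=\left\{\uparrow q_m(n)\colon m\in\w\right\}$ it forms the unbounded family $X=\{x_F\colon F\in\F\}$ with $x_F(n)=\min\{m\colon F\in\uparrow q_m(n)\}$ and shows the generic reads off a single $g$ dominating $X$.)

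The forward direction, however, has a genuine gap, which you flag yourself (``the main obstacle'') and then assume away with ``Granting the reading''. The map $\Phi_{\name f}$ you describe does not exist: for a fixed $F'$ there are infinitely many finite stems $t$ with $t\sm s\sbst F'$, and the values of $\name f(n)$ decided by conditions with these stems need not be bounded, precisely because the stem of a Mathias condition is unconstrained by $\F$ (as you yourself observe); so the ``majorizing'' value at $F'$ is in general infinite and no continuous $\Phi_{\name f}\colon\F\uhr G\to\w^\w$ records it. The paper never builds such a map. Its forward argument follows the route you announced via the Lemma and then abandoned: suppose some ground-model unbounded $X$ is forced by $\mathbb M_\F$ to be dominated by $\name g$; since $X$ is not a countable union of bounded sets, pigeonhole a single stem $s_*$ and threshold $n_*$ working for all $x\in X$; only then cover $\F$ by the sets $\uparrow s$, $s\in\mathcal S_m$, where membership of $F$ merely asserts the \emph{existence} of some tail $F_s$ with $\la s_*\cup s,F_s\ra$ deciding $\name g(m)=g_s(m)$. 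The Hurewicz property yields finite $\mathcal T_m$ with $\left\{\bigcup\V_m\colon m\in\w\right\}$ a $\gamma$-cover, $f(m)=\max\{g_s(m)\colon s\in\mathcal T_m\}$ is a finite maximum, and the contradiction $x(m)<f(m)$ falls out of the \emph{compatibility} of the two conditions $\la s_*,F^x\ra$ and $\la s_*\cup s_m,F_{s_m}\ra$ --- no reading of the name along a single condition is ever performed. Your ``fusion-type recursion'' for $q_A$ is both unnecessary (the Lemma removes the quantifier over $A$ entirely, reducing the task to preservation of unbounded families) and circular as sketched: $\mathbb M_\F$ has no fusion apparatus, and to force $\name f(n)\leq g(n)$ densely below $\la s,G_A\ra$ one must decide $\name f(n)$, which may require extending the stem beyond any finite region in which your putative bound controls the decided values, so thinning the tail alone cannot secure the estimate. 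The stem-uniformization plus cover argument above is the missing idea that repairs the direction.
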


Theorem~\ref{main2} turns out to have applications  to general
Hurewicz spaces, not only to filters. In order to formulate them we
need to recall some definitions. A Tychonov space $X$ is called a
\emph{$\gamma$-space} \cite{GerNag82} if every open $\w$-cover of
$X$ contains a $\gamma$-subcover. $\gamma$-spaces are important in
the theory of function spaces as they are exactly those $X$ for which
 the space  $C_p(X)$  of continuous functions from $X$ to
$\mathbb R$, with the topology inherited from $\mathbb R^X$, has the
Fr\'echet-Urysohn property.

 For $a\in [\w]^\w$  and $n\in\w$, $a(n)$ denotes the $n$-th
element in the increasing enumeration of $a$. For $a, b \in
[\w]^\w$, $a\leq^*b$
 means that $a(n) \leq b(n)$
for all but finitely many $n$. A $\mathfrak b$-scale is an unbounded
set $S = \{s_\alpha\colon \alpha<\mathfrak b\}$ in $([\w]^\w,\leq^*)$ such that
$s_\alpha\leq^*s_\beta$ for $\alpha<\beta$. It is easy to see that
$\mathfrak b$-scales exist in  ZFC. For each $\mathfrak b$-scale $S$,
$S\cup [\w]^{<\w}$ is $\mathfrak b$-concentrated on $[\w]^{<\w}$ in
the sense that $|S\setminus U|<\mathfrak b$ for any open $U\supset
[\w]^{<\w}$. For brevity, the union of a $\mathfrak b$-scale with
$[\w]^{<\w}$,
 viewed as a subset of the Cantor space $\mathcal P(\w)$,
will be called \emph{a $\mathfrak b$-scale set}. As an application of
Theorem~\ref{main2} we will get the following result answering
\cite[Problem~4.2]{RepTsaZdo08} in the affirmative.

\begin{corollary} \label{hur_imp_gamma}
It is consistent with ZFC that every $\hot b$-scale set is a
$\gamma$-space.
\end{corollary}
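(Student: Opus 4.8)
The plan is to build, by forcing over a model of CH, a model in which every $\bb$-scale set has the $\gamma$-space property, i.e.\ every open $\w$-cover of it contains a $\gamma$-subcover. Since every $\gamma$-space is Hurewicz, part of the work is automatically to make every $\bb$-scale set Hurewicz; I treat the failure of the Hurewicz property and the existence of an $\w$-cover without a $\gamma$-subcover as the two kinds of obstruction to be removed. The key structural remark is that the final model will satisfy $\bb=\aleph_1$, and therefore a $\bb$-scale set $X=S\cup[\w]^{<\w}$ is \emph{concentrated} on the countable dense set $[\w]^{<\w}$: only countably many points of $S$ lie outside any open neighbourhood of $[\w]^{<\w}$. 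This concentration is what couples the Hurewicz property of $X$ to the combinatorics of the filters generated by its $\w$-covers, and it is through those filters that Theorem~\ref{main2} enters.

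Concretely, I would use a countable support iteration $\la \IP_\alpha,\dot{\mathbb Q}_\alpha:\alpha<\w_2\ra$ in which each iterand is a Mathias forcing $\mathbb M_{\F_\alpha}$ associated with a filter $\F_\alpha$ constructed (using CH at stage $\alpha$) to be Hurewicz and to encode a single task delivered by a bookkeeping function: either to refine a prescribed open $\w$-cover of a prescribed $\bb$-scale set into a $\gamma$-cover, or to neutralize a prescribed continuous $f\colon X\to\w^\w$ whose range threatens to be unbounded. Routing everything through Mathias forcing pays off because, by Theorem~\ref{main2}, every iterand is almost $\w^\w$-bounding, and hence, by the Lemma preceding that theorem, preserves all unbounded families. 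By Shelah's preservation theorem for countable support iterations of proper forcings, the whole iteration $\IP_{\w_2}$ is again almost $\w^\w$-bounding; consequently the ground-model reals stay unbounded, $\bb=\aleph_1$ holds in the extension, and every $\bb$-scale occurring at an intermediate stage remains unbounded and hence is still a $\bb$-scale set. Theorem~\ref{main} additionally guarantees that no iterand adds a dominating real, which is the $\w$-cover counterpart of preserving $\bb=\aleph_1$.

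The mechanism of a single step is the standard identification of the $\gamma$-property with a pseudo-intersection phenomenon: a clopen $\w$-cover $\U$ of $X$ generates, via the traces $\{U\in\U:x\in U\}$, a filter $\F_\U$, and a $\gamma$-subcover of $\U$ is precisely a pseudo-intersection-type selection for $\F_\U$, which the Mathias real for $\mathbb M_{\F_\U}$ supplies. Once $X$ is known to be Hurewicz---which the neutralization tasks secure---the promised extension of Theorem~\ref{main2} from filters to Hurewicz spaces makes $\mathbb M_{\F_\U}$ almost $\w^\w$-bounding, so the diagonalizing step keeps all continuous images bounded and therefore does not destroy the Hurewicz property gained earlier. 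Since $\cc=\aleph_2$ in the extension and $\IP_{\w_2}$ satisfies the $\aleph_2$-chain condition, a routine reflection and bookkeeping argument ensures that every continuous image of, and every open $\w$-cover of, every $\bb$-scale set occurs below $\w_2$ and is dealt with at a later stage; because the points of a $\bb$-scale set are fixed subsets of $\w$, each $\gamma$-subcover once produced remains a $\gamma$-cover in all further extensions.

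The main obstacle is the global preservation bookkeeping: one must verify that neutralizing a bad image, or diagonalizing a bad cover, for one $\bb$-scale set at stage $\alpha$ cannot manufacture a fresh unbounded image, for the same or another $\bb$-scale set, at a later stage, and that $\bb$-scale sets arising only in the final model are also covered. This amounts to an inductive preservation theorem asserting that, along an almost $\w^\w$-bounding countable support iteration, the Hurewicz property of a concentrated set of reals is maintained while its $\w$-covers are being converted into $\gamma$-covers. The value of Theorem~\ref{main2} together with the preceding Lemma is exactly that they translate the unwieldy combinatorial ``Hurewicz'' requirement on the $\bb$-scale sets into the forcing-theoretic invariant ``almost $\w^\w$-bounding,'' which is known to be iterable with countable support. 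Reconciling the necessity of \emph{adding} the Mathias reals that produce the $\gamma$-covers with the demand of \emph{adding no} dominating real and preserving every unbounded family is the delicate quantitative heart of the construction, and it is resolved precisely by the Menger and Hurewicz characterizations of Theorems~\ref{main} and~\ref{main2}.
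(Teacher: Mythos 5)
Your overall architecture (iterate Mathias forcings for the filters generated by countable clopen $\w$-covers of the scale sets, use Theorem~\ref{main2} to keep each iterand almost $\w^\w$-bounding, preserve unboundedness so that $\hot b=\w_1$ survives, and observe that a Mathias pseudointersection of the trace filter $\F_\U$ is exactly a $\gamma$-subcover) is the same as the paper's proof via Theorem~\ref{main_tech}. But there are two genuine gaps. First, your ``neutralization tasks'' --- forcing a prescribed unbounded continuous image of a $\hot b$-scale set to become bounded without adding dominating reals --- are unsupported, and in general impossible: if the image is finitely dominating, any poset bounding it adds a dominating real, and whether this can be done even for Scheepers spaces is precisely the paper's open Question~\ref{q1} (the partial result, Theorem~\ref{turn_into_hur}, covers only Menger filters, and even there it is unknown whether $\G$ itself becomes Hurewicz). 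The paper never forces the Hurewicz property of the scale sets at all; it quotes the ZFC theorem \cite[Theorem~10]{BarTsa06} that every $\hot b$-scale set has the Hurewicz property \emph{in all finite powers}, which discharges the hypothesis of Theorem~\ref{main_tech} outright. Without this citation your construction has no way to get off the ground, since the legitimacy of every single iterand depends on Hurewiczness you have not secured.

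Second, even granting that each scale set $X$ is Hurewicz, that alone does not make $\F_\U$ a Hurewicz filter, which is what Theorem~\ref{main2} actually requires: the theorem is about filters, and your appeal to a ``promised extension of Theorem~\ref{main2} from filters to Hurewicz spaces'' has no basis in the paper. The natural base of the filter generated by $\psi[X]$, where $\psi(x)=\{n\in\w\colon x\in U_n\}$, is $\B=\left\{\bigcap\Y\colon \Y\in\left[\psi[X]\right]^{<\w}\right\}$, a countable union of continuous images of \emph{finite powers} of $\psi[X]$; Hurewiczness of $X$ alone does not pass to $X^2$, which is why Theorem~\ref{main_tech} hypothesizes that $X^n$ is Hurewicz for all $n$ (and why \cite{BarTsa06} is quoted for all finite powers). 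Two further, more minor, divergences: the paper uses a finite support c.c.c.\ iteration, keeping $\hot b=\w_1$ via preservation of $\leq^*$-well-ordered unbounded families at limit stages \cite[Lemma~6.5.7]{BarJud95}, rather than your countable support route through Shelah's preservation theorem (workable, but it buys nothing here); and your bookkeeping needs the reflection argument the paper makes explicit --- a base of size $\w_1$ that is Hurewicz in $V^{\IP_{\w_2}}$ is Hurewicz in $V^{\IP_\alpha}$ for an $\w_1$-club of $\alpha$ --- so that Theorem~\ref{main2} is applied legitimately at the stage where the task is executed, not merely in the final model.
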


The study of the relation between $\hot b$-scale sets and
$\gamma$-spaces already has some history. First of all, in the Laver model all
$\gamma$-subspaces of $2^\w$ are countable because they have strong measure zero
\cite{GerNag82}. Answering one of the questions posed in
\cite{GerNag82},   Galvin and Miller \cite{GalMil84} constructed
under $\hot p=\hot c$ a $\hot b$-scale set which is a $\gamma$-set.
Their  $\mathfrak b$-scale was a tower, where   $S =
\{s_\alpha\colon \alpha<\kappa\}\subset [\w]^\w$ is called a \emph{tower}
if $s_\alpha\subset^* s_\beta$ for all $\beta<\alpha$ and $S$ has no pseudointersection.
Later Orenshtein and Tsaban proved \cite{OreTsa11} that if $\hot p=\hot b$
then any $\hot b$-scale set is  a $\gamma$-space provided that the
corresponding $\hot b$-scale is a tower. On the other hand, under
$\hot b=\hot c$ there exists  a $\hot b$-scale set
which fails to be a $\gamma$-space, see \cite{RepTsaZdo08}.
Also, it is easy to show that such
$\hot b$-scale sets exist under $\hot p<\hot b$, see Observation~\ref{p_less_b}.
Thus  $\hot p=\hot b<\hot
c$ holds in any model
of Corollary~\ref{hur_imp_gamma}.

In section~\ref{good_applications} we discuss  when
an unbounded subset of $\w^\w$ can be made bounded by forcing without introducing
dominating reals. Some partial answers are give for filters, see
Theorem~\ref{turn_into_hur} and the Remark 5.6 at the end of the section.

While dealing with the covering properties of Menger, Hurewicz, and Scheepers,
as well as that of being the $\gamma$-space, we shall freely use that they are
(as almost all natural covering properties)  inherited by continuous images
and closed subspaces. In addition, the properties of Menger and Hurewicz are
preserved by products with $\sigma$-compact spaces and by countable unions.
These straightforward facts  exist 
 in the literature, but
we do not give any references because we believe that the reader
will need just a couple of minutes to check any of them.

For the definitions of cardinal characteristics  used in
this paper we refer the reader to  \cite{Vau90}.

\section{Proof of Theorem~\ref{main}}

Subsets of $\mathcal P(\w)$ are considered as usual
with the topology inherited from $\mathcal P(\w)$, which
is identified with the Cantor space $2^\w$ via characteristic functions.
For every $n\in\w$ and $q\subset n$ we denote  the set
$\{A\in\mathcal P(\w)\colon A\cap n=q\}$ by $[n,q]$.
The sets of the form $[n,q]$ form a base for the standard 
topology of $\mathcal P(\w)$. Set also $\uparrow X=\{A\in\mathcal P(\w)\colon A \supset X\}$
for every $X\subset\w$.

\begin{claim} \label{clear_cover_1}
Suppose that $\X\subset\mathcal P(\w)$ is closed under taking supersets
and  $\mathcal O$ is a cover of   $\X$ by sets open in $\mathcal P(\w)$.
Then there exists a family $Q\subset [\w]^{<\w}$ such that
\[\X\subset\bigcup_{q\in Q}\uparrow q\subset \bigcup\mathcal O.\]
\end{claim}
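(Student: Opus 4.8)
The plan is to produce, for every $A\in\X$, a single finite set $q_A\sbst A$ with $\uparrow q_A\sbst\bigcup\mathcal O$, and then to take $Q=\{q_A\colon A\in\X\}$. This choice immediately yields both required inclusions: from $A\in\uparrow q_A$ we get $\X\sbst\bigcup_{q\in Q}\uparrow q$, and from $\uparrow q_A\sbst\bigcup\mathcal O$ for each $A$ we get $\bigcup_{q\in Q}\uparrow q\sbst\bigcup\mathcal O$. So the whole proof reduces to the local task of finding a good $q_A$ for a fixed $A$.

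The main obstacle is exactly this choice. The naive idea, namely to pick $O\in\mathcal O$ and a basic neighborhood $[n,q]\sbst O$ with $A\in[n,q]$ and then set $q_A=q=A\cap n$, does not work: the cone $\uparrow q_A$ is strictly larger than $[n,q]$, since it also contains every set meeting $n\sm q$, and such sets need not lie in $O$ or even in $\bigcup\mathcal O$. To circumvent this I will use that $\X$ is closed under taking supersets. Because $A\in\X$, the entire cone $\uparrow A=\{B\colon B\spst A\}$ is contained in $\X$, hence in $\bigcup\mathcal O$; moreover $\uparrow A=\bigcap_{i\in A}\{B\colon i\in B\}$ is a closed subset of the compact space $\mathcal P(\w)$, so it is compact. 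This is what makes a purely topological extraction possible.

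Now I observe that $\uparrow A=\bigcap\{\uparrow q\colon q\in[A]^{<\w}\}$, since a set contains $A$ iff it contains every finite subset of $A$; each $\uparrow q$ is clopen, and the family $\{\uparrow q\colon q\in[A]^{<\w}\}$ is closed under finite intersections because $\uparrow q\cap\uparrow q'=\uparrow(q\cup q')$. Writing $F=\mathcal P(\w)\sm\bigcup\mathcal O$, which is closed and hence compact, the inclusion $\uparrow A\sbst\bigcup\mathcal O$ is precisely the statement $\bigcap\{\uparrow q\cap F\colon q\in[A]^{<\w}\}=\emptyset$. Since $\mathcal P(\w)$ is compact and the closed sets $\uparrow q\cap F$ form a family closed under finite intersections, an empty total intersection forces some single member $\uparrow q_A\cap F$ to be empty; equivalently $\uparrow q_A\sbst\bigcup\mathcal O$ for some $q_A\in[A]^{<\w}\sbst[\w]^{<\w}$. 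This argument is uniform in $A$ (finite or infinite), and the case $\X=\emptyset$ is handled trivially by $Q=\emptyset$, so collecting the $q_A$ into $Q$ completes the proof. The only genuinely non-routine point is recognizing that one should work with the compact cone $\uparrow A$ and the finite-intersection property rather than with a single basic neighborhood.
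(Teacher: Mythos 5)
Your proof is correct, and it reaches the same finite objects $q_A\in[A]^{<\w}$ as the paper does, but by a dual route. Both arguments pivot on the same key observation you flag as the non-routine point: since $\X$ is upward closed, the compact cone $\uparrow A$ sits entirely inside $\bigcup\mathcal O$, so compactness can be brought to bear. The paper implements this in ``open subcover'' form: it first reduces $\mathcal O$ to basic sets $[n,q]$, covers $\uparrow A$ by finitely many $[n_i,q_i]$, sets $n=\max_i n_i$, and takes $q_A=A\cap n$; the price is a small combinatorial verification that $\uparrow(A\cap n)\sbst\bigcup\mathcal O$, carried out by replacing an arbitrary $B\spst A\cap n$ with $(B\cap n)\cup(\w\sm n)\in\uparrow A$ and noting the two sets agree below each $n_i$ --- precisely the repair for the failure of the naive argument that you correctly diagnose. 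You instead use the ``closed sets with the finite intersection property'' form: writing $\uparrow A=\bigcap\{\uparrow q\colon q\in[A]^{<\w}\}$ as a directed intersection of clopen cones and intersecting with the closed complement $F=\mathcal P(\w)\sm\bigcup\mathcal O$, emptiness of the total intersection hands you a single $q_A$ with $\uparrow q_A\cap F=\emptyset$. Your version is slightly cleaner --- no reduction to basic open sets and no verification computation --- while the paper's version produces a canonical witness ($q_A$ is an initial trace $A\cap n$ of $A$) and its basic-cover normal form is reused immediately afterwards, e.g.\ in Corollary~\ref{menger_for_filters}, where covers by sets of the form $\uparrow q$ are refined using compactness of each $\uparrow q$. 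Substantively the two proofs are equivalent; there is no gap in yours.
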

\begin{proof}
Without loss of generality, we can assume that $\mathcal O$ consists of sets of the form
$[n,q]$.
Let us fix  $X\in \X$, notice that $\uparrow X$ is compact,
and find a finite family of basic open sets
 $\{[n_i,q_i]\colon i\in m\}\subset\mathcal O$ such that
$\uparrow X\subset \bigcup_{i\in m}[n_i,q_i]$.
Put $n = \max \{ n_i: i \in m\}$.
If $A \in \uparrow (X \cap n)$, then $(A \cap n) \cup (\omega \setminus n) \in \uparrow X$,
and there is $i \in m$ such that  $(A \cap n) \cup (\omega \setminus n) \in [n_i,q_i]$;
thus $A \in [n_i,q_i]$. We showed that
$X\in \uparrow X \subset \uparrow(X\cap n)\subset \bigcup\mathcal O$.
\end{proof}

Since every set of the form $\uparrow q$ is compact, it follows that 
for every $q\in Q$ (we use notation from Claim \ref{clear_cover_1})
there exists a finite subset $\mathcal O'\subset\mathcal O$
such that $\uparrow q\subset\bigcup \mathcal O'$.
This  gives us the following

\begin{corollary} \label{menger_for_filters}
 If $\X\subset\mathcal P(\w)$ is closed under taking supersets,
then $\X$ has the Menger property if and only if
 for every sequence $\la \U_n\colon
n\in\omega\ra$ of open covers of $\X$ by sets of the form $\uparrow
q$ for some $q\in [\w]^{<\w}$,
 there exists
a sequence $\la \V_n\colon  n\in\omega\ra$ such that $\V_n\in
[\U_n]^{<\w}$ and $\left \{\bigcup\V_n\colon  n\in\omega\right \}$ is a cover  of  $\X$.
\end{corollary}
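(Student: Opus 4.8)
The forward implication is immediate: if $\X$ is Menger, the required selection exists for \emph{every} sequence of open covers, in particular for those all of whose members have the special form $\uparrow q$. So the entire content of the corollary lies in the converse, and the only obstacle — such as it is — will be to reduce an arbitrary sequence of open covers to the special form and then transfer the resulting selection back to the original covers without losing finiteness.

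For the converse I would start with an arbitrary sequence $\la\U_n\colon n\in\w\ra$ of open covers of $\X$ and feed each $\U_n$ into Claim~\ref{clear_cover_1}, obtaining a family $Q_n\subset[\w]^{<\w}$ with $\X\subset\bigcup_{q\in Q_n}\uparrow q\subset\bigcup\U_n$. Putting $\W_n=\{\uparrow q\colon q\in Q_n\}$ gives a cover of $\X$ by sets of the special form, so the hypothesis applies to $\la\W_n\colon n\in\w\ra$ and yields finite subfamilies $\mathcal G_n\in[\W_n]^{<\w}$, say $\mathcal G_n=\{\uparrow q\colon q\in R_n\}$ with $R_n\in[Q_n]^{<\w}$, such that $\{\bigcup\mathcal G_n\colon n\in\w\}$ covers $\X$.

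The final step is to convert each $\mathcal G_n$ back into a finite subfamily of the original $\U_n$. Here I would invoke the remark preceding the corollary: since each $\uparrow q$ is compact and lies inside $\bigcup\U_n$ whenever $q\in Q_n$, there is a finite $\U_{n,q}\in[\U_n]^{<\w}$ with $\uparrow q\subset\bigcup\U_{n,q}$. Because $R_n$ is finite, $\V_n=\bigcup_{q\in R_n}\U_{n,q}$ is again a finite subfamily of $\U_n$ and satisfies $\bigcup\V_n\supset\bigcup_{q\in R_n}\uparrow q=\bigcup\mathcal G_n$; hence $\{\bigcup\V_n\colon n\in\w\}$ covers $\X$, which is precisely the Menger selection for $\la\U_n\ra$. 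I expect no genuine difficulty: the argument is a bookkeeping combination of Claim~\ref{clear_cover_1} with the compactness of the sets $\uparrow q$, and the single point requiring attention is that finiteness is preserved at the last transfer, which is guaranteed by the finiteness of each $R_n$.
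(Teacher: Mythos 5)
Your proof is correct and is exactly the argument the paper intends: the corollary is stated as an immediate consequence of Claim~\ref{clear_cover_1} together with the remark that each $\uparrow q$ is compact and hence admits a finite subcover from $\U_n$, which is precisely your transfer step. The forward direction is trivial as you note (each $\uparrow q$ is clopen in $\mathcal P(\w)$, so the special covers are open covers), and your bookkeeping for the converse, including the finiteness of $\V_n=\bigcup_{q\in R_n}\U_{n,q}$, is exactly right.
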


A set $\I\subset \mathcal P(\w)$ is called \emph{an ideal},
if $\F:=\{\w\setminus I\colon I\in\I\}$ is a filter. In this case we write
$\I=\F^*$ and $\F=\I^*$.
The collection of all $\I$-positive sets $\mathcal P(\omega) \setminus \I$ is denoted by $\I^+$
or $\F^+$, if $\F$ is the filter dual to $\I$.
Following \cite{HruMin??}
we call an ideal $\I$  a \emph{$P^+$-ideal} if for every decreasing sequence $\la X_n \colon
n\in\w\ra$  of $\I$-positive sets, there is an $X\in\I^+$  such that $X \subset^* X_n$ for
all $n\in\w$. We shall also use the following notation:
\[ \I^{<\w}=\{A\subset [\w]^{<\w}\colon  \exists I\in\I\:\forall a\in A\: (a\cap I\neq\emptyset)\}. \]
It is easy to see that $\I^{<\w}$ is an ideal on $[\w]^{<\w}$,
and  letting $\F=\I^*$ we have  $\I^{<\w}=(\F^{<\w})^*$, where $\F^{<\w}$
is the filter on $[\w]^{<\w}$ consisting of sets containing $[F]^{<\w}$
for some $F\in\F$.

The following claim is known, we give here a proof for  reader's convenience.
A~stronger form of this result is presented in~\cite{LafLear}.

\begin{claim}\label{P+-characterization}
Let $\I$ be an ideal on $\w$. Then $\I$ is a $P^+$-ideal if and only if  for
every sequence $\la X_n \colon  n\in\w\ra$  of $\I$-positive sets there is
a sequence $\la Y_n \colon  n\in\w\ra$ of finite sets such that
$Y_n\subset X_n$ and $\bigcup_{n\in\w}Y_n\in\I^+$.
\end{claim}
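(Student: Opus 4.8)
The plan is to prove the two implications separately; the reverse direction is a short deduction, whereas the forward direction needs a genuine construction.

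First I would handle the implication ``finite--selection property $\Rightarrow$ $P^+$''. Assume that every sequence of $\I$-positive sets admits a finite selection with positive union, and let $\la X_n\colon n\in\w\ra$ be a \emph{decreasing} sequence of $\I$-positive sets. Applying the hypothesis produces finite sets $Y_n\subseteq X_n$ with $Y:=\bigcup_{n\in\w}Y_n\in\I^+$. The point is that $Y$ is automatically a pseudointersection of the $X_n$: for a fixed $m$ and any $n\ge m$ we have $Y_n\subseteq X_n\subseteq X_m$ by monotonicity, so $Y\setminus X_m\subseteq\bigcup_{n<m}Y_n$ is finite, i.e. $Y\subseteq^* X_m$. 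Hence $\I$ is a $P^+$-ideal.

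For the converse I would assume $\I$ is $P^+$ and take an arbitrary sequence $\la X_n\colon n\in\w\ra$ of $\I$-positive sets. The first step is to reduce to the decreasing case by passing to the tails $\tilde X_n:=\bigcup_{k\ge n}X_k$. These form a decreasing sequence, and each is $\I$-positive since $\tilde X_n\supseteq X_n$ and $\I$ is closed downwards. The $P^+$-property then yields $X\in\I^+$ with $X\subseteq^*\tilde X_n$ for every $n$. Because finite sets lie in $\I$, the set $X$ is infinite, and I enumerate it increasingly as $X=\{x_i\colon i\in\w\}$. The heart of the argument is to convert this single pseudointersection into a finite selection from the original sets. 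For each $m$ the relation $X\subseteq^*\tilde X_m$ supplies a threshold $N_m$ such that $x_i\in\bigcup_{k\ge m}X_k$ whenever $i\ge N_m$, and I may take the $N_m$ strictly increasing with $N_0=0$. For each $i$ let $m(i)$ be the largest $m$ with $N_m\le i$, and choose $k(i)\ge m(i)$ with $x_i\in X_{k(i)}$, discarding the finitely many $x_i$ that lie in no $X_k$. As $i\to\infty$ we have $m(i)\to\infty$, hence $k(i)\to\infty$, so the map $i\mapsto k(i)$ is finite-to-one. Setting $Y_n:=\{x_i\colon k(i)=n\}$ then gives finite sets with $Y_n\subseteq X_n$ whose union is a cofinite subset of $X$, and therefore still $\I$-positive.

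The main obstacle is precisely this last step: arranging simultaneously that each $Y_n$ is \emph{finite} and that their union stays positive. Both are delivered by the tail construction, which forces the witnessing index $k(i)$ to tend to infinity and thereby makes the selection finite-to-one, all the while retaining cofinitely many elements of the positive set $X$.
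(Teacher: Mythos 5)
Your proof is correct and follows essentially the same route as the paper's: both reduce to the decreasing tail unions $\bigcup_{k\ge n}X_k$, extract a pseudointersection via the $P^+$-property, and then convert it into a finite-to-one assignment of indices producing the finite selections $Y_n\subseteq X_n$. The only difference is bookkeeping: the paper assigns to each point the maximal $n$ with $y\in\bigcup_{m\ge n}X_m$ (with a separate case for points lying in every tail, i.e.\ in infinitely many $X_k$), whereas your threshold sequence $N_m$ treats all points uniformly.
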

\begin{proof}
The ``if'' part is obvious. To prove the ``only if'' part fix a
sequence $\la X_n \colon  n\in\w\ra$  of $\I$-positive sets
and set $X'_n=\bigcup_{m\geq n}X_m$ for all $n\in\w$. Then
$\la X'_n \colon  n\in\w\ra$ is a decreasing sequence of  $\I$-positive sets,
and hence there exists $Y\in\I^+$ such that $Y\subset^* X'_n$ for all
$n\in\w$. Without loss of generality we may assume that $Y\subset X'_0$.
For every $y\in Y\setminus\bigcap_{n\in\w}X'_n$
let $n(y)$ be the maximal $n$ such that $y\in X'_n$.
For $y\in Y\cap\bigcap_{n\in\w}X'_n$  let $n(y)$ be any $n>y$.
Then $Y_n=\{y\in Y\colon  n(y)=n \}$ is finite, $Y_n\subset X_n$,
and $Y=\bigcup_{n\in\w}Y_n$.
\end{proof}

The proof of the following fact is more or less just
a reformulation.

\begin{claim}\label{trivial_but_funny}
Let $\I$ be an ideal. Then $\I^{<\w}$ is a $P^+$-ideal
if and only if $\I$ is a Menger  subspace of $\mathcal P(\w)$.
\end{claim}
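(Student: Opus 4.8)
The plan is to read off both directions from the combinatorial reformulations already at hand, namely Corollary~\ref{menger_for_filters} and Claim~\ref{P+-characterization}, by setting up an exact dictionary between admissible open covers and $(\I^{<\w})^+$-families in $[\w]^{<\w}$. First I would pass from $\I$ to its dual filter $\F=\I^*$: the complementation map $A\mapsto\w\setminus A$ is a homeomorphism of $\mathcal P(\w)$ carrying $\I$ onto $\F$, and since the Menger property is topological, $\I$ is Menger if and only if $\F$ is. As $\F$ is closed under taking supersets, Corollary~\ref{menger_for_filters} applies to it, so it suffices to test the Menger selection principle on covers of $\F$ by sets of the form $\uparrow q$ with $q\in[\w]^{<\w}$.

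The key observation is that such covers are literally the $(\I^{<\w})^+$-families. Unwinding the definitions, a set $Q\subset[\w]^{<\w}$ satisfies $Q\in(\I^{<\w})^+$ exactly when for every $F\in\F$ there is $q\in Q$ with $q\subset F$; and this is precisely the statement that $\{\uparrow q\colon q\in Q\}$ covers $\F$, since $F\in\uparrow q$ means $q\subset F$. Thus $Q\mapsto\{\uparrow q\colon q\in Q\}$ is a bijection between $(\I^{<\w})^+$-families and admissible open covers of $\F$, under which finite subfamilies $\V_n\in[\U_n]^{<\w}$ correspond to finite subsets $Y_n\subset Q_n$.

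It then remains to match the two selection conditions. Given $(\I^{<\w})^+$-families $Q_n$ and finite subsets $Y_n\subset Q_n$, the family $\left\{\bigcup_{q\in Y_n}\uparrow q\colon n\in\w\right\}$ covers $\F$ if and only if every $F\in\F$ contains some $q\in\bigcup_{n}Y_n$, which is exactly the assertion $\bigcup_{n}Y_n\in(\I^{<\w})^+$. Hence, via Corollary~\ref{menger_for_filters}, $\F$ is Menger if and only if for every sequence $\langle Q_n\colon n\in\w\rangle$ of $(\I^{<\w})^+$-sets there are finite $Y_n\subset Q_n$ with $\bigcup_n Y_n\in(\I^{<\w})^+$. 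By Claim~\ref{P+-characterization}, applied to the ideal $\I^{<\w}$ on the countable set $[\w]^{<\w}$, this last condition is exactly the assertion that $\I^{<\w}$ is a $P^+$-ideal, which settles both directions simultaneously.

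Since the claim is, as noted, essentially a reformulation, I expect no deep obstacle; the only care needed is to verify that the dictionary between covers and positive families is exact in \emph{both} directions, so that the universal quantifier over admissible covers matches the one over sequences of positive sets, and to observe that Claim~\ref{P+-characterization}, though stated for ideals on $\w$, transfers verbatim to $\I^{<\w}$ once a bijection of $[\w]^{<\w}$ with $\w$ is fixed.
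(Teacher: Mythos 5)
Your proposal is correct and takes essentially the same route as the paper's own proof: both pass to the dual filter $\I^*$ via the complementation homeomorphism, invoke Corollary~\ref{menger_for_filters} to restrict attention to covers by sets of the form $\uparrow q$, exploit the exact correspondence between such covers of $\I^*$ and $(\I^{<\w})^+$-families (with finite subfamilies matching finite subsets), and close with the finite-selection characterization of $P^+$-ideals from Claim~\ref{P+-characterization}. The only differences are presentational: you phrase the argument as a single two-way dictionary rather than two separately written implications, and you make explicit the (harmless) transfer of Claim~\ref{P+-characterization} from ideals on $\w$ to the ideal $\I^{<\w}$ on the countable set $[\w]^{<\w}$, which the paper uses tacitly.
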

\begin{proof}
Since $\I$ is homeomorphic to $\I^*$ it is enough to show that $\I^*$
is Menger if and only if $\I^{<\w}$ is a $P^+$-ideal.

Assume that $\I^{<\w}$ is a $P^+$-ideal and fix a sequence $\la
\U_n\colon n\in\omega\ra$ of open covers of $\I^*$ by sets of the form
$\uparrow a$ for some $a\in [\w]^{<\w}$. Set $A_n=\{a\colon \uparrow
a\in\U_n\}$. Since $\U_n$ covers $\I^*$, for every $F\in\I^*$ there
exists $a\in A_n$ such that $a\subset F$, which means that $A_n$ is
$\I^{<\w}$-positive. Therefore there exists a sequence $\la
B_n\colon n\in\w\ra$ such that $B_n\in [A_n]^{<\w}$ and
$B=\bigcup_{n\in\w}B_n\in \left(\I^{<\w}\right)^+$. This means that for
every $F\in \I^*$ there exists $b\in B$ such that $b\subset F$,
i.e., that $\{\uparrow b\colon b\in B\}$ covers $\I^*$. Thus for every $n$
we can select a finite subset of $\U_n$ (namely $\V_n = \{\uparrow
b\colon b\in B_n\}$) whose union covers $\I^*$. By
Corollary~\ref{menger_for_filters} this means that $\I^*$ is Menger.

Now suppose that $\I^*$ is Menger and fix a sequence
$\la A_n\colon n\in\w\ra$ of $\I^{<\w}$-positive sets.
For every $n$ set $\U_n=\{\uparrow a \colon  a\in A_n\}$ and notice that
$\U_n$ is a cover of $\I^*$ by sets open in $\mathcal P(\w)$.
Thus for every $n$ there exists a finite $\V_n\subset\U_n$
such that $\bigcup_{n\in\w}\V_n\supset\I^*$. Let $B_n\in [A_n]^{<\w}$
be such that $\V_n=\{\uparrow a\colon a\in B_n\}$.
It follows that for every $F\in\I^*$ there exists
$a\in\bigcup_{n\in\w}B_n$ such that $a\subset F$. In other words,
$\bigcup_{n\in\w}B_n$ is  $\I^{<\w}$-positive, which completes our proof.
\end{proof}

Now Theorem~\ref{main} is a direct consequence of Claim~\ref{trivial_but_funny},
the fact that $\F$ is homeomorphic to $\F^*$ for any filter $\F$,
and  the following important

\begin{theorem}\textup{\cite[Theorem 3.8]{HruMin??}}
 Let $\I$ be an ideal on $\w$. Then $\mathbb M_{\I^*}$ does not add a
dominating real if and only if $\I^{<\w}$ is a $P^+$-ideal.
\end{theorem}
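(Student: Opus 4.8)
The plan is to prove both implications by translating $\mathbb M_{\I^*}$-names for reals directly into the finite-set combinatorics of $[\w]^{<\w}$, exploiting that the stems of conditions are precisely elements of $[\w]^{<\w}$ while the side conditions range over $\F=\I^*$. The single fact driving everything is the reinterpretation of positivity: a set $A\subseteq[\w]^{<\w}$ is $\I^{<\w}$-positive exactly when every $F\in\F$ contains some $a\in A$ as a subset, i.e. when $\{\uparrow a\colon a\in A\}$ covers $\F$. In forcing terms this reads: given any condition $\la s,F\ra$, after first shrinking $F$ to $F\setminus(\max s+1)\in\F$, positivity of $A$ produces $a\in A$ with $a\subseteq F$ and hence the stronger condition $\la s\cup a,\ F\setminus(\max a+1)\ra$. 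Thus $\I^{<\w}$-positive sets are exactly the reservoirs of legal stem-extensions, and I will freely replace the hypothesis ``$\I^{<\w}$ is a $P^+$-ideal'' by its finite-selection form (Claim~\ref{P+-characterization}) and by its covering form (Corollary~\ref{menger_for_filters}, Claim~\ref{trivial_but_funny}) whenever convenient.

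For the implication ``not $P^+$ $\Rightarrow$ dominating real'' I would start from a witness to failure in the finite-selection form of Claim~\ref{P+-characterization}: a sequence $\la A_n\colon n\in\w\ra$ of $\I^{<\w}$-positive sets such that $\bigcup_n Y_n$ is never positive whenever $Y_n\in[A_n]^{<\w}$. Using the extension principle above (after routine thinning of the $A_n$) I define a name $\dot f$ whose value $\dot f(n)$ records the largest element of the configuration from $A_n$ that the generic absorbs around stage $n$. A density argument, shrinking the reservoir below any prescribed bound before choosing $a\in A_n$, shows each $\dot f(n)$ can be forced arbitrarily large. To see that $\dot f$ actually dominates, I argue by contradiction: a condition forcing $g(n)>\dot f(n)$ for infinitely many $n$ would, by reading off the finitely many configurations compatible with ``$\dot f(n)\le g(n)$'' at each level, yield finite sets $Y_n\subseteq A_n$ whose union is $\I^{<\w}$-positive, contradicting the choice of $\la A_n\ra$.

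For the converse ``$P^+$ $\Rightarrow$ no dominating real'', fix a name $\dot f$ and a condition $p=\la s,F\ra$; I must produce a ground-model $g\in\w^\w$ and $q\le p$ with $q\Vdash\exists^\infty n\,(g(n)>\dot f(n))$. For each $n$ I form the set $A_n$ of stems $a$ admitting a side condition $F_a\in\F$ for which $\la s\cup a,F_a\ra$ decides $\dot f(n)$, say below a ground-model value $h(a)$; a density argument shows each $A_n$ is $\I^{<\w}$-positive. Feeding $\la A_n\colon n\in\w\ra$ into the finite selection supplied by the $P^+$ hypothesis (equivalently, the Menger selection of Corollary~\ref{menger_for_filters} applied to the covers $\{\uparrow a\colon a\in A_n\}$) yields $Y_n\in[A_n]^{<\w}$ with $\bigcup_n Y_n$ still positive. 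I set $g(n)=1+\max\{h(a)\colon a\in Y_n\}$ and interleave the finitely many chosen configurations into a single decreasing sequence of side conditions, fusing them into one $q\le p$ whose reservoir forces the generic to absorb some $a\in Y_n$ for infinitely many $n$, capping $\dot f(n)<g(n)$ at those levels.

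The step I expect to be the main obstacle is precisely this last fusion. Because $\mathbb M_{\I^*}$ always adds unbounded reals, one cannot bound $\dot f$ at \emph{every} coordinate; the selection only delivers positive configurations spread \emph{across the whole sequence} of levels, so the control on $\dot f$ is merely infinitely-often, and the finitely many chosen stems at the various levels must be threaded into one decreasing chain of side conditions whose intersection still lies in $\F$. Guaranteeing that this single reservoir both keeps the selected pieces positive and forces the generic to meet $Y_n$ for infinitely many $n$ is the delicate bookkeeping. The same difficulty recurs, in dual form, in the first implication: for a general, non-ultra filter the full pure decision of $\dot f$ along a fixed side condition fails, and it is exactly the $P^+$/Menger selection that substitutes for it.
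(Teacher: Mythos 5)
The paper never proves this statement: it is imported verbatim as \cite[Theorem 3.8]{HruMin??} and used as a black box, and the only in-paper template for a direct proof is the argument for Theorem~\ref{main2} (see Remark~4.2). Measured against that template, your first direction is essentially correct. After your ``routine thinning'' (members of $A_n$ nonempty, with minima above $n$ and above the current stem), letting $\dot f(n)$ be $\max a$ for the first $a\in A_n$ absorbed into the generic beyond $n$ does work: the set $Y_n=\{a\in A_n\colon \max a\leq g(n)\}$ is finite, and any condition forcing $g(n)\geq\dot f(n)$ for infinitely many $n$ makes $\bigcup_n Y_n$ positive, since for arbitrary $F''\in\F$ one may shrink the side condition to $F''$ and extend the stem to decide such an $n$ beyond the current stem together with its witness $a$, which then lies in the stem growth and hence inside $F''$. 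This contradicts the failure of the selection property of Claim~\ref{P+-characterization}, and it is the same mechanism by which the generic real reads a dominating function off the covers in the second half of the proof of Theorem~\ref{main2}.

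The genuine gap is in the converse, exactly at the step you flag as ``the main obstacle'': the fusion cannot be performed, for two concrete reasons. First, a single $q=\la s,F_q\ra\leq p$ that forces the generic to absorb some $a\in Y_n$ for infinitely many $n$ would need $F_q$ to lie inside infinitely many of the chosen side conditions $F_a$; but $\F$ is only closed under \emph{finite} intersections, and a Mathias condition (finite stem, one filter set) admits no infinite fusion. Second, and more fatally, the decisions do not survive stem growth: if the stem has grown to $s\cup t$ with $t\neq\emptyset$, then for $a$ with $\min a>\max t$ the condition with stem $s\cup t\cup a$ does \emph{not} extend $\la s\cup a,F_a\ra$ (the set $s\cup t\cup a$ is not an end-extension of $s\cup a$), so the generic absorbing $a$ does not activate $\dot f(n)=h(a)$; since $\mathbb M_\F$ has no pure decision for a general filter, conditions forcing $\dot f(n)<g(n)$ for large $n$ are dense only below conditions with stem exactly $s$, which is not density below $p$ and produces no such $q$. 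Nor can you instead refute $p\Vdash g<^{*}\dot f$ directly: the extension deciding the threshold beyond which $g<\dot f$ may already carry a longer stem, while your $A_n$ (hence $g$) were built over the stem $s$, so the argument is circular. The repair is the scheme of the first half of the proof of Theorem~\ref{main2}, transposed from Hurewicz to Menger: assume some $p$ forces $\dot f$ to dominate $V\cap\w^\w$, use that $\w^\w$ is not a countable union of non-dominating sets to stabilize a single stem $s_*$ and threshold $n_*$ across a still-dominating family of ground-model reals $x$ with witnesses $\la s_*,F^x\ra$; build the positive sets $\mathcal S_m$ over the fixed stem $s_*$, apply the $P^+$/Menger selection in the strengthened form in which every $F\in\F$ meets the selection at infinitely many levels $m$ (as in Lemma~\ref{prod_imp}), set $f(m)$ to the maximum of the finitely many decided values, and derive the contradiction from the mere \emph{compatibility} of the two fixed-stem conditions $\la s_*,F^x\ra$ and $\la s_*\cup a,F_a\ra$ for $a\subseteq F^x$ with $\min a>\max s_*$. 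No master condition is ever constructed, and none is needed.
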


\section{Straightforward applications of Theorem~\ref{main}} \label{applications_menger}

Recall that $A,B $ are called \emph{almost disjoint}, if
$A\cap B$ is finite. Given a countable set $I$, an infinite set $\A\subset [I]^\w$ is said to be an
\emph{almost disjoint family} (on $I$)  if
any two elements of $\A$ are almost disjoint. $\A$ is called a
\emph{mad family} (on $I$), if it is
 maximal with respect    to inclusion among
almost disjoint families on $I$. 
Every almost disjoint family $\A$ generates an ideal
\[ \I(\A) = \left\{I \subset \w \colon \exists \B \in [\A]^{<\w} \: \left(I \subset^* \bigcup \B \right)\right\}.  \]
The dual filter is denoted by $\F(\A)$.
Theorem~\ref{main}  allows us to give an easy proof of the
following recent result of Guzm\'an, Hru\v{s}\'ak and Mart\'{\i}nez \cite[Proposition~6]{GuzHruMar??}, answering
\cite[Question~2.7]{Bre98} in the negative.

\begin{proposition} \label{answer2}
There exists a mad family $\A$ on $\w$ such that $\mathbb M_{\F(\A)}$
adds a dominating real.
\end{proposition}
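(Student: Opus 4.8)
The plan is to route everything through Theorem~\ref{main}: since $\mathbb M_{\F(\A)}$ adds a dominating real if and only if $\F(\A)$ fails to be Menger, and $\F(\A)$ is homeomorphic to $\I(\A)$, it suffices to produce a mad family $\A$ for which $\I(\A)$ is \emph{not} a Menger subspace of $\PP(\w)$. By Claim~\ref{trivial_but_funny} this is equivalent to arranging that $\I(\A)^{<\w}$ is not a $P^+$-ideal, and by Claim~\ref{P+-characterization} the latter means exhibiting a sequence $\la Z_k : k\in\w\ra$ of $\I(\A)^{<\w}$-positive subsets of $[\w]^{<\w}$ that admits no ``diagonal'' selection $Y_k\in[Z_k]^{<\w}$ with $\bigcup_k Y_k$ positive. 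Thus the entire problem reduces to one combinatorial construction: build a maximal almost disjoint family together with such a witnessing sequence.

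For the skeleton I would work on $\w\times\w$ (transported to $\w$ by a fixed bijection) and start from the pairwise disjoint rows $R_n=\w\times\{n\}$, all of which go into $\A$. These rows already force positivity to be ``spread out'': a finite set escapes any finitely many rows only by living in arbitrarily high rows, so the natural candidate witness consists of levels $Z_k$ whose members live in rows $\ge k$, and the elements of $\I(\A)$ coming from rows can never cover a diagonal selection, since finitely many rows reach only finitely many levels. The role of the remaining, madifying part of $\A$ is exactly the opposite: I would feed in graphs $g_x=\{(x(n),n):n\in\w\}$ of a dominating, pairwise eventually different family of functions, so that finitely many such graphs form ``cheap'' transversals meeting one point in each row and can be bundled to meet every set in a given diagonal selection. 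Balancing these two effects---each level $Z_k$ positive because no \emph{fixed} finite family of rows and graphs meets all its members, yet every diagonal selection non-positive because \emph{some} finite family does---is the combinatorial core.

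The construction itself I would run as a recursion of length $\cc$, at successive stages (i) enumerating all infinite subsets of $\w\times\w$ and, whenever one is still almost disjoint from everything chosen so far, absorbing it by adding a set meeting it infinitely, thereby guaranteeing maximality, and (ii) committing the members $a\in Z_k$ and the madifying graphs in tandem, each new graph chosen to run transversally through the already-committed levels so that it helps catch diagonal selections without destroying the positivity of any single $Z_k$. The bookkeeping must ensure that at the end $\bigcup_k Y_k$ is covered by finitely many members of $\A$ for \emph{every} choice of finite $Y_k\subseteq Z_k$.

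The main obstacle is the tension between maximality and the witness. Maximality forces new sets into $\A$, hence into $\I(\A)$, and the sets one is compelled to add tend to reappear as positive pseudo-intersections, i.e.\ as forbidden positive diagonal selections, thereby restoring the $P^+$-property and Mengerness; for instance, the naive witness built from singletons $Z_k$ and $X_k=\bigcup_{m\ge k}R_m$ is defeated by the graph of any function eventually different from everything chosen, which is positive and almost contained in every $X_k$. The heart of the argument is therefore to build the madifying sets and the witnessing levels \emph{simultaneously}, so that the only infinite sets almost disjoint from $\A$---which madness obliges us to absorb---are ones whose absorption creates no positive diagonal selection; equivalently, so that every potential positive pseudo-intersection is anticipated and killed during the recursion while maximality is nonetheless achieved.
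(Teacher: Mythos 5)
Your opening reduction is exactly the paper's: invoke Theorem~\ref{main} and conclude that it suffices to build a mad family $\A$ with $\I(\A)$ non-Menger (the further translation through Claims~\ref{trivial_but_funny} and~\ref{P+-characterization} into failure of the $P^+$-property for $\I(\A)^{<\w}$ is correct but optional). After that point, however, what you offer is a program, not a proof, and the gap sits precisely where you yourself place ``the combinatorial core.'' The levels $Z_k$ are never defined, so neither of the two competing requirements can be checked: that each $Z_k$ stays $\I(\A)^{<\w}$-positive against \emph{every} finite subfamily of the final $\A$, and that \emph{every} diagonal selection $\la Y_k\ra$ (there are continuum many) gets trapped by \emph{some} finite subfamily. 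With your proposed material there is moreover a concrete difficulty: a graph $g_x=\{(x(n),n):n\in\w\}$ meets a finite set $a$ only if $x$ passes exactly through a point of $a$, so domination alone does not make finitely many graphs a transversal for an arbitrary diagonal selection; forcing the members of $Z_k$ to carry traces wide enough to be caught works against the positivity of $Z_k$ with respect to those same graphs. Finally, the decisive issue you name --- that maximality obliges you to block \emph{every} infinite set almost disjoint from what has been built, while each such absorption threatens to create a positive pseudointersection --- is acknowledged but not resolved: ``anticipate and kill every potential positive diagonal selection'' is the statement of the problem, not a mechanism, and nothing in the sketch shows the $\cc$-length recursion can simultaneously meet the absorption demands and preserve the witness. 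That simultaneous satisfiability is the entire content of the proposition.

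It is worth seeing how the paper sidesteps all of this, since the trick is importable into your framework. The paper works on $2^{<\w}$ and takes the \emph{compact} almost disjoint family $\C=\{C_x:x\in 2^\w\}$ of branches; it fixes a countable dense $\C'\subseteq\C$ and plants an infinite mad family $\A_C$ of subsets of $C$ inside each $C\in\C'$, then extends $\A_0=(\C\setminus\C')\cup\bigcup_{C\in\C'}\A_C$ to a mad family $\A$ \emph{arbitrarily} --- no bookkeeping at all. Protection of the witness is automatic and local: given any finite $\B\subseteq\A$ and $C\in\C'$, pick $A\in\A_C\setminus\B$; every member of $\B$ meets $A$ finitely, so $A\not\subset^*\bigcup\B$ and hence $C\not\subset^*\bigcup\B$. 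Consequently $\I(\A)\cap\C=\C\setminus\C'$, a closed copy of $\w^\w$ sitting closed inside $\I(\A)$, and since the Menger property is closed-hereditary while $\w^\w$ is not Menger, $\I(\A)$ is not Menger. In other words: instead of certifying failure of the $P^+$-property by hand against all of $\A$, exhibit a closed non-Menger subspace whose membership in the ideal is decided once and for all by the planted mad families, rendering the later (uncontrolled) steps toward maximality harmless. That is the missing idea in your proposal.
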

\begin{proof}
By Theorem~\ref{main} it is enough to construct a
mad family   $\A$ on $2^{<\w}$
such that $\I(\A)$ is not Menger.
Set $\C=\{C_x \colon x \in 2^\w\}$, where  $C_x=\{x\uhr n \colon  n \in \w\}$.
Then $\C$ is a compact almost disjoint family. Take a dense countable subset
$\C'$ of $\C$ and  for every $C$ in $\C'$ fix an infinite mad family
$\A_C$ of infinite subsets of $C$. Consider
$\A_0=\left(\C\setminus \C' \right) \cup  \bigcup_{C \in \C'}\A_C$
and extend $\A_0$ to a mad family $\A$ of infinite subsets of $2^{<\w}$.

We claim that $C\setminus\bigcup \B$ is infinite for all $\B\in [\A]^{<\w}$ and $C\in \C'$.
Indeed, let us fix $C, \B$, and  $A\in\A_C \setminus \B$.
Then all elements of
$\B$ have finite intersection with $A$ and hence $A\not\subset^*\bigcup\B$.
Therefore $C\not\subset^*\bigcup\B$ as well.

Thus $\I(\A)\cap \C=\C\setminus \C'$,
and hence $\I(\A)$ contains a closed copy of $\w^\w$. It remains to
note that $\w^\w$ is not Menger and that the Menger property is inherited
by closed subsets.
\end{proof}

As mentioned before, 
the Menger and Hurewicz properties are preserved by closed subspaces
and products with compact spaces,
continuous images, and countable unions.
Thus if a filter $\F$ on $\w$ has a
base $\B$ which is  Menger (Hurewicz),
then $\F$ is Menger (Hurewicz) as well:
$\F=\psi[\B\times\mathcal P(\w)]$, where $\psi(B,X)=B\cup X$,
and $\psi$ is continuous.

Let $\U$ be an ultrafilter. For  $x,y\in\w^\w$ the notation $x\leq_\U
y $ means that $\{n \colon  x(n)\leq y(n)\}\in\U$.
We will also use the notation $A \leq_\U B$ for $A, B \in {[\omega]}^\omega$
by interpreting sets as their enumerating functions.
The relation $\leq_\U$
is a linear pre-ordering of $\w^\w$ whose cofinality is usually
denoted by $\hot d(\U)$.

Another application of Theorem~\ref{main} is the following result
improving \cite[Proposition~8]{GuzHruMar??} and partially answering \cite[Problem~2]{GuzHruMar??}.
 Instead of proving it directly
 we shall give a more streamlined argument  using
\cite[Proposition~8]{GuzHruMar??}.

\begin{proposition} \label{d_eq_c}
If \ $\hot d=\hot c$, then there exists an infinite mad family $\A$ such that
$\I(\A)$ is Menger.
\end{proposition}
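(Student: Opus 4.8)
The plan is to avoid a direct construction and instead route everything through Theorem~\ref{main}. Recall (as used throughout Section~2) that for any almost disjoint family $\A$ the ideal $\I(\A)$ is homeomorphic to its dual filter $\F(\A)$, so $\I(\A)$ is Menger if and only if $\F(\A)$ is Menger, which by Theorem~\ref{main} happens exactly when $\mathbb M_{\F(\A)}$ adds no dominating real. Hence the whole proposition reduces to the following: under $\hot d=\hot c$, produce an infinite mad family $\A$ such that $\mathbb M_{\F(\A)}$ adds no dominating real. The Mengerness of $\I(\A)$ is then free.

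Next I would not build such an $\A$ by hand, but extract it from an ultrafilter using \cite[Proposition~8]{GuzHruMar??}. In the present language, that proposition yields a mad family $\A$ (with $\F(\A)\subseteq\U$) for which $\mathbb M_{\F(\A)}$ adds no dominating real, as soon as one is given an ultrafilter $\U$ whose $\leq_\U$-cofinality is as large as it can possibly be, namely $\hot d(\U)=\hot d$. The intuition is that a large value of $\hot d(\U)$ leaves no small $\leq_\U$-cofinal (hence $\leq_\U$-dominating) family for a putative Mathias-generic dominating real to exploit. So the task collapses to a purely combinatorial one: under $\hot d=\hot c$, construct an ultrafilter $\U$ with $\hot d(\U)=\hot d=\hot c$.

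I would construct $\U$ by a transfinite recursion of length $\hot c$. Enumerate $\mathcal P(\w)=\{A_\alpha\colon\alpha<\hot c\}$ and build an increasing chain $\langle\F_\alpha\colon\alpha<\hot c\rangle$ of filters, each generated by fewer than $\hot c$ sets, so that at stage $\alpha$ the set $A_\alpha$ (or its complement) is adjoined; then $\U=\bigcup_{\alpha<\hot c}\F_\alpha$ is an ultrafilter. To pin the cofinality at $\hot c$ I would interleave a diagonalization: at stage $\alpha$ only fewer than $\hot c$ functions have so far been committed, and since $\hot d=\hot c$ no such small family is $\leq^*$-dominating, so one can choose a fresh function together with a witnessing set to place into $\F_{\alpha+1}$ which is not $\leq_\U$-dominated by anything committed earlier. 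Carrying this through for all $\alpha<\hot c$ prevents any family of size $<\hot c$ from being $\leq_\U$-cofinal, giving $\hot d(\U)=\hot c=\hot d$. Feeding this $\U$ into \cite[Proposition~8]{GuzHruMar??} and then invoking Theorem~\ref{main} finishes the proof.

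The step I expect to be the main obstacle is exactly the bookkeeping that keeps $\hot d(\U)$ equal to $\hot c$ throughout the recursion: a $\leq_\U$-cofinal family in the \emph{completed} order $(\w^\w,\leq_\U)$ need not consist of the ground functions I have explicitly handled, so I must argue that defeating domination by the committed functions stage by stage genuinely rules out \emph{every} small cofinal family at the end. The second delicate point is matching the construction to the precise hypothesis of \cite[Proposition~8]{GuzHruMar??} (that $\hot d(\U)=\hot d$ is the quantity it requires, and that the mad family it returns is infinite and genuinely maximal); once these are in place, the reduction through Theorem~\ref{main} is routine.
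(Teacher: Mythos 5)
Your opening reduction via Theorem~\ref{main} is harmless, but the core of your argument rests on a misquotation of \cite[Proposition~8]{GuzHruMar??} and on an ultrafilter construction that cannot work in general. First, as quoted in the paper, \cite[Proposition~8]{GuzHruMar??} states that under $\hot d=\hot r=\hot c$ there exists a mad family generating a Menger ideal; it is not a device converting an arbitrary ultrafilter $\U$ with $\hot d(\U)=\hot d$ into a mad family $\A$ such that $\mathbb M_{\F(\A)}$ adds no dominating real. Since $\hot d=\hot c$ does not imply $\hot r=\hot c$ (in the Miller model $\hot r=\w_1<\w_2=\hot d=\hot c$), a proof that routes every case through Proposition~8 cannot establish the full statement --- indeed the whole point of Proposition~\ref{d_eq_c} is to \emph{improve} Proposition~8 by removing the hypothesis $\hot r=\hot c$, so your reduction is essentially circular where it matters. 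Second, your plan to build $\U$ with $\hot d(\U)=\hot d=\hot c$ is outright impossible when $\hot c$ is singular: $\leq_\U$ is a linear preorder, so $\hot d(\U)$, being the cofinality of a linear preorder, is always a regular cardinal, and $\hot d=\hot c$ singular is consistent. This is precisely why the paper's proof splits into two cases: for singular $\hot d$ it deduces $\hot u=\hot d=\hot c$ from Blass's result (since $\hot u<\hot d$ would force $\hot d$ to be regular), obtains $\hot r=\hot c$ via Aubrey's inequality $\min\{\hot d,\hot u\}\leq\hot r$, and only then invokes Proposition~8; for regular $\hot d$ the existence of $\U$ with $\hot d(\U)=\cf(\hot d)=\hot c$ is quoted from Canjar \cite{Can89} rather than re-proved. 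Your recursion sketch for this ultrafilter has exactly the gap you concede: defeating domination by the functions committed so far does not bound arbitrary $\leq_\U$-cofinal families of size $<\hot c$; closing that gap is the substance of Canjar's theorem, not bookkeeping.

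Moreover, even granting the ultrafilter, the actual work of the paper's regular case is absent from your proposal. The paper uses $\U$ to construct the mad family directly: enumerating $[\w]^\w$ as $\{S_\alpha\colon\alpha<\hot d\}$ with the first $\w$ terms almost disjoint, it picks $A_\alpha\subset S_\alpha$ which is $\leq_\U$-above all previously used sets (possible because any family of size $<\hot d(\U)$ is bounded in the linear preorder $\leq_\U$), so that $\A$ behaves like a $\leq_\U$-scale. Mengerness of $\I(\A)$ is then proved \emph{topologically, with no forcing at all}: if $|\A|<\hot d$ one uses that $\hot d$ is the minimal cardinality of a non-Menger set of reals; if $|\A|=\hot d$ one applies \cite[Cor.~4.3]{TsaZdo08} to conclude that all finite powers of $\A\cup[\w]^{<\w}$ are Menger; finally $\I(\A)=\bigcup_{n\in\w}\I_n$ where each $\I_n$ is a continuous image of $(\A\cup[\w]^{<\w})^n\times\mathcal P(\w)$, and Menger is preserved by continuous images, compact products, and countable unions. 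Your proposal replaces all of this by an appeal to a proposition that does not say what you need, so as written the proof does not go through.
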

\begin{proof}
First assume that $\hot d$ is regular and fix an enumeration
$\{S_\alpha\colon \alpha<\hot d\}$ of $[\w]^\w$ such that
$\{S_\alpha \colon \alpha < \w\}$ is an almost disjoint family. It is well-known
\cite{Can89} that there exists an ultrafilter $\U$ with $\hot
d(\U)=\mathrm{cf}(\hot d)$, which equals $\hot c$ in our case.
 We shall construct
$\A=\{A_\alpha\colon \alpha<\hot d\}$ by induction. At stage  $\alpha$ we
 pick $A_\alpha\subset S_\alpha$ such that
$\{S_\beta\colon \beta<\alpha\}\cup\{A_\beta\colon \beta <\alpha\}\leq_\U
A_\alpha$ provided
that $S_\alpha$ is almost disjoint from
$A_\beta$ for all $\beta<\alpha$. Otherwise we set $A_\alpha$ to be
equal to one of the $A_\beta$'s constructed before. This finishes
our construction of $\A$. It is clear that $\A$ is mad.
It is well known and easy to see that $\mathfrak d$ is the minimal
cardinality of non-Menger set of reals.
Hence if $|\A|<\hot d$, then all finite powers of $\A$ are Menger.
If $|\A|=\hot d$ then
\cite[Cor.~4.3]{TsaZdo08} ensures that all finite powers of $\A\cup
[\w]^{<\w}$ are Menger. In any case, $\I(\A)$ is Menger because it
can be written in the form $\bigcup_{n\in\w}\I_n$, where
\[\I_n=\left\{\bigcup_{i\in n}A^i\cap X\colon  \la A^i\colon i\in n\ra\in (\A\cup [\w]^{<\w})^n, X\in\mathcal P(\w)\right\}\]
is a continuous image
 of $(\A\cup [\w]^{<\w})^n\times \mathcal P(\w)$.

Now suppose that $\hot d$ is singular.
 It has been established in
the proof of \cite[Theorem~16]{Bla86} that  $\hot u<\hot d$ yields
$\hot d(\U)=\hot d $ for any ultrafilter $\U$ generated by $\hot u$ many
sets. Thus $\hot u<\hot d$ implies that $\hot d$ is regular, and
hence we have $\hot u=\hot d=\hot c$ by the singularity of $\hot d$.
Now $\min \{\hot d, \hot u\} \leq \hot r$ (see~\cite{Aubrey}) implies
$\hot r =\hot c$ and
it suffices to apply \cite[Prop. 8]{GuzHruMar??} which states that
under $\hot d=\hot r=\hot c$ there exists a mad family generating
a~Menger ideal.
\end{proof}

\begin{observation} \label{triv12}
If a filter $\F$ is Menger (Hurewicz),
then so is $\F^{<\w}$.
\end{observation}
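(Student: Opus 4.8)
The plan is to realize $\F^{<\w}$ as a continuous image of a product of $\F$ with a compact space. This is the same device used just above for the \emph{base argument} (the observation that a filter with a Menger, resp. Hurewicz, base inherits that property), only now applied to the canonical base $\{[F]^{<\w}\colon F\in\F\}$ of $\F^{<\w}$, with the extra ingredient that this base is itself a continuous image of $\F$.

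Concretely, I would consider the map $\Psi\colon \mathcal P(\w)\times\mathcal P([\w]^{<\w})\to\mathcal P([\w]^{<\w})$ given by $\Psi(F,X)=[F]^{<\w}\cup X$ and restrict it to $\F\times\mathcal P([\w]^{<\w})$. The one point that genuinely needs checking — and hence the main (though modest) obstacle — is the \textbf{continuity} of $\Psi$. Since $\mathcal P([\w]^{<\w})$ carries the Cantor-space topology, it suffices to verify that for each $a\in[\w]^{<\w}$ the preimage of the subbasic clopen set $\{Z\colon a\in Z\}$ is clopen. That preimage is $\{(F,X)\colon a\sbst F\}\cup\{(F,X)\colon a\in X\}$. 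The first set is clopen because $a$ is finite, so the condition $a\sbst F$ depends on only finitely many coordinates of $F$; the second is clopen as it depends on the single coordinate of $X$ indexed by $a$. A union of two clopen sets is clopen, so $\Psi$ is continuous.

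Next I would identify the image: as $F$ ranges over $\F$ and $X$ over all of $\mathcal P([\w]^{<\w})$, the value $[F]^{<\w}\cup X$ runs through exactly the sets $Z\sbst[\w]^{<\w}$ for which $[F]^{<\w}\sbst Z$ for some $F\in\F$, i.e.\ through $\F^{<\w}$ by its very definition. Hence $\F^{<\w}=\Psi[\F\times\mathcal P([\w]^{<\w})]$. The conclusion then follows purely from the preservation facts recorded in the introduction: $[\w]^{<\w}$ is countable, so $\mathcal P([\w]^{<\w})$ is compact; thus $\F\times\mathcal P([\w]^{<\w})$ is Menger (resp.\ Hurewicz) whenever $\F$ is, since these properties are preserved by products with $\sigma$-compact spaces; and being a continuous image of this product, $\F^{<\w}$ inherits the same property. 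Everything apart from the continuity check is bookkeeping.
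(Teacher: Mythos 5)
Your proof is correct and is essentially the paper's own argument: the paper's proof just factors your map $\Psi$ as the composition of the continuous map $\phi\colon F\mapsto [F]^{<\w}$ (so that $\phi[\F]$ is a Menger, resp.\ Hurewicz, base of $\F^{<\w}$) with the base argument $\psi(B,X)=B\cup X$ recorded in Section~3, whereas you write out the composite and verify its continuity directly. Both proofs rest on the same two preservation facts (continuous images, products with compact spaces), so there is nothing to add.
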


Note that the converse implication is also true since
$\F$ is isomorphic to a closed subset of $\F^{<\w}$.

\begin{proof}
The map $\phi\colon \F\to \mathcal P([\w]^{<\w})$ assigning
to $F\in\F$ the set $[F]^{<\w}$ is continuous.
Thus $\F^{<\w}$ has a Menger (Hurewicz) base and hence is Menger (Hurewicz).
\end{proof}

Combining Theorem~\ref{main}, Observation~\ref{triv12}, and  \cite[Prop.\,5]{GuzHruMar??}
we get a negative answer to \cite[Problem 4]{GuzHruMar??}\footnote{The
formulation of \cite[Problem 4]{GuzHruMar??} involves notions
which will not be used in our paper, and hence we refer
the reader to \cite{GuzHruMar??} for its precise formulation.}.
\smallskip

Following \cite{GuzHruMar??} (Laflamme \cite{Laf89} for ultrafilters)
we say that a filter $\F$ is a \emph{strong $P^+$-filter} if for every
sequence $\la \C_n\colon n\in\w\ra$ of compact subsets of $\F^+$
there exists an increasing sequence $\la k_n\colon n\in\w\ra$
of integers such that if $X_n\in\C_n$ for all $n$,
then $\bigcup_{n\in\w}\left(X_n\cap \left[k_n,k_{n+1}\right)\right)\in\F^+$.
The characterization of $P^+$-filters given in Claim~\ref{P+-characterization}
implies that every strong $P^+$-filter is a $P^+$-filter.

We shall need the following game of length $\omega$ on a topological space $X$:
 In the $n$th  move player $I$ chooses a countable open cover
$\U_n$ of $X$, and player $\mathit{II}$ responds by choosing a
finite $\V_n\subset \U_n$. Player $\mathit{II}$ wins the game if $\bigcup_{n\in\omega}
\bigcup\V_n =X$. Otherwise, player $I$ wins.  We shall call this game
 \emph{the Menger game}\footnote{In case $X$ is a filter on $\omega$, notice the similarity of this game
to the game $\mathfrak G ({\F}^+,[\omega]^{<\omega},{\F}^+)$ from~\cite{LafLear}, 
where $\F=X^{<\omega}$.} on $X$.
 It is well-known  that
  $X$  is Menger  if and only if
 player $I$ has no winning strategy in the Menger game on
$X$, see \cite{Hur25} or \cite[Theorem~13]{COC1}.
Note that if $I$ plays with covers $\U_n$ closed under finite unions,
then we can assume that the player $\mathit{II}$ replies by choosing
one-element subsets of the $\U_n$'s.

The following result together with \cite[Proposition~3]{GuzHruMar??}
answers \cite[Problem~3]{GuzHruMar??} in the negative.

\begin{proposition} \label{stron_f_plus}
Every Menger filter $\F$ is a strong $P^+$-filter.
\end{proposition}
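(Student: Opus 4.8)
The plan is to exploit the game-theoretic characterization recalled just before the statement: $\F$ is Menger if and only if player $I$ has no winning strategy in the Menger game on $\F$. So I will fix a sequence $\la \C_n\colon n\in\w\ra$ of compact subsets of $\F^+$ and design a strategy for player $I$ whose sole purpose is that, along any run in which player $\mathit{II}$ wins, player $\mathit{II}$'s moves directly read off an increasing sequence $\la k_n\colon n\in\w\ra$ witnessing the strong $P^+$-property. Since $\F$ is Menger, the designed strategy is not winning, so such a run for $\mathit{II}$ must exist, and it yields the conclusion.

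The key step is a compactness argument producing, at each stage, an increasing open cover of $\F$. Given a stage $n$ with current threshold $k_n$ and an integer $m>k_n$, set
\[ G_n(m)=\{F\in\F\colon X\cap F\cap [k_n,m)\neq\emptyset\text{ for every }X\in\C_n\}. \]
Since membership in $G_n(m)$ depends only on $F\cap[k_n,m)$, each $G_n(m)$ is clopen in $\mathcal P(\w)$, and plainly $G_n(m)\subseteq G_n(m')$ whenever $m\leq m'$. I claim $\{G_n(m)\colon m>k_n\}$ covers $\F$. Indeed, fix $F\in\F$; every $X\in\C_n$ lies in $\F^+$, so $X\cap F$ is infinite and meets $[k_n,\infty)$, and I may choose $m_X>k_n$ with $X\cap F\cap[k_n,m_X)\neq\emptyset$. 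The set $\{X'\in\C_n\colon X'\cap F\cap[k_n,m_X)\neq\emptyset\}$ is relatively clopen in $\C_n$ and contains $X$, so these sets cover the compact space $\C_n$; taking $m$ to be the maximum of the $m_X$ over a finite subcover gives $F\in G_n(m)$.

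The strategy for player $I$ is now immediate: set $k_0=0$, and at stage $n$, once $k_n$ has been determined by the earlier moves, play the cover $\U_n=\{G_n(m)\colon m>k_n\}$. Being increasing, $\U_n$ is closed under finite unions, so by the remark preceding the statement player $\mathit{II}$ may be assumed to reply with a single set $G_n(m_n)$; player $I$ then records $k_{n+1}=m_n>k_n$, so that $\la k_n\colon n\in\w\ra$ is strictly increasing. As $\F$ is Menger, this is not a winning strategy, hence there is a run in which player $\mathit{II}$ wins, i.e. $\bigcup_{n\in\w}G_n(m_n)=\F$. To verify the strong $P^+$-property for this $\la k_n\ra$, take arbitrary $X_n\in\C_n$ and an arbitrary $F\in\F$. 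Since the $G_n(m_n)$ cover $\F$, there is $n$ with $F\in G_n(m_n)$, which by definition yields $X_n\cap F\cap[k_n,m_n)\neq\emptyset$; as $m_n=k_{n+1}$, the set $F$ meets $\bigcup_{n\in\w}\big(X_n\cap[k_n,k_{n+1})\big)$. Because $F\in\F$ was arbitrary, this union lies in $\F^+$, as required.

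I expect the main obstacle to be exactly the compactness step: recognizing that once $k_n$ is fixed, the positivity of each member of $\C_n$ can be upgraded \emph{uniformly over the compact set} $\C_n$ (though only pointwise in $F$) to membership of $F$ in some $G_n(m)$, and that the remaining ``for all $F\in\F$'' quantifier is precisely what the Menger game dispatches. Getting the interaction between the block structure $[k_n,k_{n+1})$ and the increasing covers $\U_n$ to line up correctly is the delicate part; everything else is bookkeeping.
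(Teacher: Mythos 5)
Your proof is correct, and while it uses the same two essential ingredients as the paper's proof---compactness of each $\C_n$ to upgrade pointwise positivity to uniform meeting on a finite interval, and the characterization of Menger via the nonexistence of a winning strategy for player $I$---the implementation is genuinely different and, in fact, leaner. The paper first attaches to each $F\in\F$ a canonical block sequence $\la k^F_n\colon n\in\w\ra$ (chosen so that $[k^F_n,k^F_{n+1})$ meets $F\cap X$ for all $X\in\C_n$), observes that $F\mapsto\la k^F_n\ra$ is continuous, and plays the Menger game on the Menger image $K\subset\w^\w$ with covers given by thresholds on coordinates; this transfer requires the normalization $\C_n\subseteq\C_m$ for $n<m$ and a delicate double-indexing in the final verification (finding $n$ with $k_n\leq k^F_{k_n}<k^F_{k_n+1}<k_{n+1}$ and using $X_n\in\C_n\subseteq\C_{k_n}$). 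You instead play the game directly on $\F$ with the increasing relatively clopen covers $\left\{G_n(m)\colon m>k_n\right\}$, which dispenses with the auxiliary space $K$, the monotonicity assumption on the $\C_n$'s, and the index shift, and reads the witnessing sequence straight off a losing run for player $I$; closure of your covers under finite unions (they form a chain) makes the reduction to one-element replies by player $\mathit{II}$ immediate. All the individual steps check out: $X\cap F$ is indeed infinite for $X\in\F^+$ and $F\in\F$; each $G_n(m)$ is relatively clopen since membership depends only on the trace $F\cap[k_n,m)$; the covering claim follows correctly from a finite subcover of $\C_n$ by the clopen sets you describe, taking the maximum of the corresponding $m_X$; and the quantifier order in the conclusion (the run fixes $\la k_n\ra$ once and for all, then for each choice of $\la X_n\ra$ and each $F$ one finds $n$ with $F\in G_n(k_{n+1})$) is exactly what the definition of strong $P^+$-filter demands. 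What the paper's route exhibits explicitly is the continuous encoding of $\F$ into $\w^\w$, a device reusable elsewhere; what yours buys is fewer moving parts and no bookkeeping hazards.
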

\begin{proof}
Let $\la \C_n\colon n\in\w\ra$  be a sequence of compact subsets of $\F^+$, and
assume without loss of generality that $\C_n \subseteq \C_m$ for $n<m$.
For every $F \in \F$ consider an increasing sequence $\la k_{n}^F\colon n\in\w\ra$
defined as follows: $k_{0}^F=0$, and $k^F_{n+1}$ is the minimal integer
such that $[k^F_{n},k^F_{n+1})\cap F\cap X\neq\emptyset$ for all $X\in\C_n$.
The existence of such a number follows by the
 compactness of $\C_n$. Moreover, it is easy to see that
 $F\mapsto \la k^F_{n}\colon n\in\w\ra$ is a continuous   map
from $\F$ to $\w^\w$, and hence its range $K:=\{\la k^F_{n}\ra_{n\in\w}\colon F\in\F\}\subset\w^\w$
is Menger.

Let us consider the following strategy of the  player $I$
in the Menger game on~$K$: $I$ starts by choosing
the cover $\U_0=\{U^0_m\colon m\in\w\}$ of $K$, where $U^0_m$ is the set of all
$\la k^F_{n}\ra_{n\in\w}\in K$ such that $k^F_{1}<m$.
Suppose that $\mathit{II}$ replies by choosing $U^0_{k_0}$.
Then in the next move  $I$ chooses a cover
$\U_1=\{U^1_m\colon m\in\w, m > k_0\}$ of $K$, where $U^1_m$ is the set of all
$\la k^F_{n}\ra_{n\in\w}\in K$ such that $k^F_{k_0+1}<m$.
If  $\mathit{II}$ replies by choosing $U^1_{k_1}$, then
 $I$ chooses
$\U_2=\{U^2_m\colon m\in\w, m> k_1\}$, where $U^2_m$ is the set of all
$\la k^F_{n}\ra_{n\in\w}\in K$ such that $k^F_{k_1+1}<m$, and so on.
Since $K$ is Menger, the strategy of $I$ defined above is not winning.
Therefore there exists a play $\la \U_n, U^n_{k_n}\colon n\in\w \ra$
in which $I$ follows this strategy and looses, i.e., $K\subset\bigcup_{n\in\w}U^n_{k_n}$.
We claim that the sequence $\la k_n\colon n\in\w\ra$ is as required.
For this we shall show that for any $F\in\F$ and any sequence
$\la X_n \in \C_n \colon n\in\w\ra$
there exists $n$ such that $X_n \cap F \cap [k_{n},k_{n+1})\neq\emptyset$.
Indeed, since $K\subset\bigcup_{n\in\w}U^n_{k_n}$, it follows that there exists
$n$ such that $k_n \leq k^F_{k_n} < k^F_{k_n+1}<k_{n+1}$.
Since $X_n \in \C_n \subseteq \C_{k_n}$,
we have that $[k^F_{k_n},k^F_{k_{n}+1})\cap F\cap X_n\neq\emptyset$,
which completes our proof.
\end{proof}

To conclude this section, let us review various equivalent characterizations of filters $\F$
for which the forcing $\mathbb M_\F$ does not add dominating reals.
The following theorem combines results of this paper with results from~\cite{GuzHruMar??}.

\begin{theorem}
    Let $\F$ be a filter on $\omega$. The following are equivalent:
    \begin{enumerate}
        \item $\mathbb M_\F$ does not add dominating reals,
        \item $\F$ is Menger,
        \item $\F^{<\omega}$ is Menger,
        \item $\F^{<\omega}$ is a $P^+$ filter,
        \item $\F$ is a strong $P^+$ filter,
        \item $\F^{<\omega}$ is a strong $P^+$ filter.
    \end{enumerate}
\end{theorem}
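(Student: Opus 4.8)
The plan is to prove the six conditions equivalent by assembling implications that have, for the most part, already been established in the excerpt, taking care with the filter/ideal translation. I would first treat $(1)$, $(2)$, $(3)$, $(4)$ as a single equivalence class and then attach $(5)$ and $(6)$ to it.

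The equivalence $(1)\Leftrightarrow(2)$ is precisely Theorem~\ref{main}. For $(2)\Leftrightarrow(3)$ I would cite Observation~\ref{triv12} for the forward direction and the remark immediately after it for the converse, which holds because $\F$ sits inside $\F^{<\w}$ as a closed subspace and the Menger property is inherited by closed subspaces. For $(2)\Leftrightarrow(4)$ the crucial point is the dictionary recorded before Claim~\ref{P+-characterization}: putting $\I=\F^*$, one has $(\F^{<\w})^*=\I^{<\w}$, so that ``$\F^{<\w}$ is a $P^+$-filter'' is literally ``$\I^{<\w}$ is a $P^+$-ideal''. Claim~\ref{trivial_but_funny} says this is equivalent to $\I$ being Menger, and since $\I=\F^*$ is homeomorphic to $\F$, it is equivalent to $(2)$.

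It then remains to fold in the two strong-$P^+$ conditions. The implication $(2)\Rightarrow(5)$ is exactly Proposition~\ref{stron_f_plus}, and for $(3)\Rightarrow(6)$ I would apply the same proposition to the filter $\F^{<\w}$ on the countable set $[\w]^{<\w}$ (identified with $\w$, so that all the notions above apply verbatim). To descend from $(6)$ I would use the remark, following the definition of strong $P^+$-filters, that every strong $P^+$-filter is a $P^+$-filter via Claim~\ref{P+-characterization}; applied to $\F^{<\w}$ this gives $(6)\Rightarrow(4)$, placing $(6)$ in the equivalence class. Finally, to bring $(5)$ back I would appeal to the converse of Proposition~\ref{stron_f_plus} --- that every strong $P^+$-filter is Menger --- which is supplied by \cite[Proposition~3]{GuzHruMar??} and yields $(5)\Rightarrow(2)$ (equivalently $(5)\Rightarrow(1)$ through Theorem~\ref{main}).

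I do not expect a genuinely new difficulty here: the statement is a synthesis of results already in hand, and the single external ingredient is \cite[Proposition~3]{GuzHruMar??}, which closes the strong-$P^+$ side. The only real care needed is the bookkeeping of dualities --- keeping straight which assertions concern $\F$ and which concern $\F^{<\w}$, and translating between the filter formulations in $(2)$--$(6)$ and the ideal formulation of Claim~\ref{trivial_but_funny} through the correspondences $\F\leftrightarrow\F^*$ and $\F^{<\w}\leftrightarrow\I^{<\w}$.
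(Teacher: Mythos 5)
Your assembly is essentially the paper's own proof: the paper gives no separate argument for this theorem, stating only that it combines its earlier results with those of \cite{GuzHruMar??}, and your decomposition --- $(1)\Leftrightarrow(2)$ by Theorem~\ref{main}, $(2)\Leftrightarrow(3)$ by Observation~\ref{triv12} and the remark after it, $(2)\Leftrightarrow(4)$ by the duality $(\F^{<\w})^*=\I^{<\w}$ together with Claim~\ref{trivial_but_funny} (equivalently \cite[Theorem~3.8]{HruMin??}), $(2)\Rightarrow(5)$ and $(3)\Rightarrow(6)$ by Proposition~\ref{stron_f_plus} (the latter applied to the filter $\F^{<\w}$ on the countable set $[\w]^{<\w}$), $(6)\Rightarrow(4)$ by the strong-$P^+$-to-$P^+$ remark via Claim~\ref{P+-characterization}, and the closing step from \cite{GuzHruMar??} --- is exactly the intended synthesis. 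The only cosmetic caveat is that \cite[Proposition~3]{GuzHruMar??} is literally the statement that every strong $P^+$-filter is Canjar, i.e., it yields $(5)\Rightarrow(1)$ rather than $(5)\Rightarrow(2)$ directly, but as you note yourself these are interchangeable through Theorem~\ref{main}.
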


\section{Hurewicz filters and $\gamma$-spaces} \label{444}

First we shall prove Theorem~\ref{main2}.  Suppose that $\F$ is
Hurewicz, but there exists an unbounded $X\subset\w^\w$, $X\in V$,
and an $\mathbb M_\F$-name $\name{g}$ for a  function dominating $X$
(for simplicity assume that every condition forces this).
For every $x\in X$ let us find $n^x\in\w$ and a condition $\la s^x,
F^x\ra$   forcing $x(n)<\name{g}(n)$ for all $n\geq n^x$. Since $X$
cannot be covered by a countable family of bounded sets, we may
assume that $s^x$ and $n^x$ do not depend on $x$, i.e., $s^x=s_*$
and $n^x=n_*$ for all $x\in X$.

For every $m\in\w$ let $\mathcal S_m$ be the set of those $ s\in
[\w]^{<\w}$ such that $\max s_*<\min s$  and there exist $F_s\in\F$
such that $\la  s_*\cup s , F_s \ra$ forces $\name{g}(m)$ to be
equal to some $g_{s}(m)$. It is clear that for every $F\in\F$ there
exists $s\in\mathcal S_m$ such that $s\subset F $. In other words, $
\U_m:=\{\uparrow s\colon   s \in \mathcal S_m\} $ is an open cover of
$\F$. Since $\F$ is Hurewicz, for every $m$ there exists a finite
$\V_m\subset\U_m$ such that $\left\{\bigcup\V_m\colon m\in\w\right\}$ is a
$\gamma$-cover of $\F$. Let $\mathcal T_m\in [\mathcal S_m]^{<\w}$
be such that $\V_m=\{\uparrow s \colon  s \in \mathcal T_m\}$ and
$f(m)=\max\{g_{s}(m)\colon s\in\mathcal T_m\}$.
We will derive a contradiction by showing that  $x <^* f$ for each $x \in X$.
Fix $x\in X$ and $l\in\w$  such that
for every $m \geq l$ there exists $s_m \in \mathcal T_m$ such that
$F^x\in  \uparrow s_m $.
Pick any  $m\geq n_*,l$.
Since $\la s_*, F^x\ra \Vdash x(m) < \dot g(m)$,
$\la s_* \cup s_m, F_{s_m}\ra \Vdash \dot g(m) \leq f(m)$,
and these two conditions are compatible, it follows that $x(m) < f(m)$.

\smallskip

Now suppose that $\F$ is not Hurewicz as witnessed by a sequence
$\la\U_n\colon n\in\w\ra$ of
 covers of $\F$ by sets open in $\mathcal P(\w)$. By
 Claim~\ref{clear_cover_1} we may additionally assume that
 $\U_n=\left\{\uparrow q_{m}(n)\colon m\in\w\right\}$, where $q_{m}(n)\in [\w]^{<\w}$.
 For every $F\in \F$ consider the function $x_F\in\w^\w$,
 $x_F(n)=\min\left\{m\colon F\in\uparrow q_{m}(n)\right\}$. It follows from
 the fact that $\F$ is not Hurewicz
 that $X=\left\{x_F\colon F\in\F\right\}$ is unbounded.

 Now let $G$ be the generic pseudointersection of $\F$ added by $\mathbb
 M_\F$.

\begin{claim}
    For every $n$ there exists
 $g(n)$ such that $G\setminus n\in \uparrow q_{g(n)}(n)$.
\end{claim}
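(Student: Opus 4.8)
The plan is to prove the claim by a density argument for $\mathbb M_\F$, working entirely in the ground model and then invoking genericity of $G$. Fix $n\in\w$ and consider the set
\[
D_n=\left\{\la s,F\ra\in\mathbb M_\F\colon \exists m\in\w\ \big(q_m(n)\sbst s\ \text{and}\ q_m(n)\cap n=\emptyset\big)\right\}.
\]
Any condition in $D_n$ forces $q_m(n)\sbst G\sm n$, since the stem of a condition in the generic filter is always contained in the generic real $G$. I would therefore show that $D_n$ is dense below every condition; genericity then yields, for each $n$, some $m$ with $G\sm n\in\uparrow q_m(n)$, and setting $g(n)$ to be the least such $m$ establishes the claim.

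To prove density, I would fix an arbitrary condition $\la s,F\ra$ and pass to the tail $F\sm n$, which is again a member of $\F$ because $\w\sm n$ is cofinite and $\F$ is closed under finite intersections. Since $\U_n=\{\uparrow q_m(n)\colon m\in\w\}$ covers $\F$, there is $m$ with $F\sm n\in\uparrow q_m(n)$, that is, $q_m(n)\sbst F$ and $q_m(n)\cap n=\emptyset$. (If $q_m(n)=\emptyset$, then $G\sm n\in\uparrow q_m(n)$ holds trivially, so assume $q_m(n)\neq\emptyset$.) Because $q_m(n)\sbst F$ and $\max s<\min F$, we get $\max s<\min q_m(n)$, so $s':=s\cup q_m(n)$ is a legitimate end-extension of $s$ with $s'\sm s\sbst F$. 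Putting $F'=F\cap(\max q_m(n),\w)$, the pair $\la s',F'\ra$ is a valid condition stronger than $\la s,F\ra$, and it lies in $D_n$.

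I expect the only genuine subtlety to be the requirement $q_m(n)\cap n=\emptyset$. Applying the covering property directly to $F$ would only produce $q_m(n)\sbst F$, which need not avoid $\{0,\dots,n-1\}$; applying it instead to the tail $F\sm n\in\F$ is precisely what forces the witnessing $q_m(n)$ to live in $\w\sm n$, which is what makes the extension land in $D_n$ and guarantees $q_m(n)\sbst G\sm n$. Everything else is the routine check that $\la s',F'\ra$ is a Mathias condition below $\la s,F\ra$ (end-extension of the stem, smaller filter part, and $\max s'<\min F'$). Once density is established, the claim follows at once from genericity of $G$.
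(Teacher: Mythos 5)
Your proof is correct and takes essentially the same approach as the paper: the paper's (terser) argument likewise observes that the subcover $\U_n' = \{\uparrow q_m(n)\colon q_m(n)\cap n=\emptyset\}$ still covers $\F$ and that the conditions $\la s,F\ra$ with $q_m(n)\subseteq s\setminus n$ for some $m$ form a dense set. Your tail trick of applying $\U_n$ to $F\setminus n\in\F$ is exactly the implicit justification behind both of these steps; you have merely written out the density check in full detail.
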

\begin{proof}
The set $\U_n' = \left\{\uparrow q_{m}(n)\colon q_{m}(n) \cap n = \emptyset ,m\in\w\right\}$
 covers $\F$ because $\U_n$ is a cover of $\F$.
Hence for every $F \in \F$ there is some $\uparrow q_{m}(n) \in \U_n'$ such that
$F \in \uparrow q_{m}(n)$, and
the set of conditions $\la s, F\ra$ such that $q_{m}(n) \subseteq s \setminus n$ for some $m\in \omega$ is dense.
\end{proof}
Let us fix $F\in\F$ and find $n$ such that $G\setminus n\subset F$.
Then $G\setminus n\in \uparrow q_{g(n)}(n)$ yields $F\in \uparrow
q_{g(n)}(n)$, which implies $x_F(n)\leq g(n)$. Thus $g \in \omega^\omega$
is dominating $X$, and therefore $\mathbb M_\F$ fails to preserve
ground model unbounded sets.  \hfill $\Box_{\mbox{\tiny
Theorem~\ref{main2}}}$
\medskip

\noindent\textbf{Remark 4.2.} \stepcounter{theorem} 
Theorem~\ref{main} can be proved directly using the ideas of the proof
of Theorem~\ref{main2}. On the other hand, the proof of \cite[Theorem~3.8]{HruMin??}
could be easily modified to get a combinatorial characterization of filters
$\F$ such that $\mathbb M_\F$ is almost $\w^\w$-bounding, and then Theorem~\ref{main2}
can be proved in the same way as Theorem~\ref{main}. We have deliberately presented
two approaches.
\hfill $\Box$

\medskip

By \cite[Theorem 10]{BarTsa06} every $\mathfrak b$-scale set has the
Hurewicz property  in all finite powers. Thus
Corollary~\ref{hur_imp_gamma} is a direct consequence of the
following

\begin{theorem} \label{main_tech}
It is consistent with ZFC that $\mathfrak b=\w_1$ and every Tychonov
space $X$  of size $\w_1$ is a $\gamma$-space provided
 that $X^n$ is Hurewicz for all $n\in\w$.
\end{theorem}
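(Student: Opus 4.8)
The plan is to build the desired model by a countable support iteration $\la \IP_\alpha,\name{\IQ}_\alpha\colon \alpha<\w_2\ra$ of length $\w_2$ over a ground model of GCH, where each iterand $\name{\IQ}_\alpha$ is a Mathias forcing $\IM_{\F}$ associated with a filter $\F$ that is Hurewicz in $V^{\IP_\alpha}$. A bookkeeping of length $\w_2$ (available since under GCH every intermediate model satisfies $2^\w=\w_1$ and $|\IP_\alpha|\le\w_1$) will guarantee that every $\w_1$-sized filter base appearing at some stage and Hurewicz there is treated cofinally often. Each $\IM_\F$ is $\sigma$-centered, since conditions sharing a stem $s$ are compatible via $\la s,F\cap F'\ra$, hence proper; so the iteration preserves $\w_1$, is $\w_2$-c.c.\ under GCH, and yields $\cc=\w_2$.

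First I would isolate the combinatorial core. Given a Tychonov space $X$ with $X^n$ Hurewicz for all $n$ and a countable open $\w$-cover $\cU=\{U_n\colon n\in\w\}$ (which, after a harmless normalization, we may assume contains every finite $K\sbst X$ in infinitely many $U_n$), associate to each $x\in X$ the set $C_x=\{n\colon x\in U_n\}$ and let $\F_\cU$ be the filter generated by $\{C_x\colon x\in X\}$ together with the cofinite sets. The $\w$-cover property gives the finite intersection property, and a pseudointersection of $\F_\cU$ is exactly a $\gamma$-subcover of $\cU$. The key lemma I would prove is that $\F_\cU$ is Hurewicz. For this, note that for $\bar x\in X^k$ the map $\Phi_k(\bar x)=\{n\colon \bar x\in U_n^k\}=\bigcap_i C_{x_i}$ satisfies $\Phi_k^{-1}(\uparrow q)=\big(\bigcap_{n\in q}U_n\big)^k$, which is open in $X^k$. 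Since by Claim~\ref{clear_cover_1} one only needs to treat covers of the superset-closed set $\F_\cU$ by sets of the form $\uparrow q$, each such cover pulls back under $\Phi_k$ to an open $\w$-cover of $X^k$. Applying the Hurewicz property of $X^k$ for each $k$ and diagonalizing — taking at stage $m$ the union of the finite selections obtained for all $k\le m$ — produces finite subfamilies of the original cover whose unions form a $\gamma$-cover of $\F_\cU$. Hence $\IM_{\F_\cU}$ is almost $\w^\w$-bounding by Theorem~\ref{main2}, and its generic real is a $\gamma$-subcover. Because a statement $G\sbst^* C_x$ about fixed subsets of $\w$ is absolute, this remains a $\gamma$-subcover in the final extension; and since all finite powers of a Hurewicz space are Lindel\"of, every open $\w$-cover contains a countable one, so adding $\gamma$-subcovers of all countable $\w$-covers suffices to make $X$ a $\gamma$-space.

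The two points requiring the most care — and the main obstacles — are keeping $\bb=\w_1$ and reflecting the hypotheses to an intermediate model. For the former I would invoke Shelah's preservation theorem \cite{shelah-alm_bound}: a countable support iteration of proper almost $\w^\w$-bounding forcings is again almost $\w^\w$-bounding, so by the lemma characterizing almost $\w^\w$-boundedness as preservation of unbounded families, the iteration preserves $\w^\w\cap V$ as an unbounded family, whence $\bb=\w_1$. The same preservation theorem, applied to tails of the iteration, resolves the reflection problem. Given a pair $(X,\cU)$ witnessing failure of the $\gamma$-property in $V[G]$, the $\w_1$-sized family $\{C_x\colon x\in X\}$ appears in some $V[G_\alpha]$ with $\alpha<\w_2$ (by $\w_2$-c.c.\ and $\cf\w_2>\w_1$), and although $\F_\cU$ is known to be Hurewicz only in $V[G]$, the characterization of the Hurewicz property as ``every continuous image in $\w^\w$ is bounded,'' together with the tail forcing being almost $\w^\w$-bounding, shows that $\F_\cU$ is already Hurewicz in $V[G_\alpha]$: a continuous image that were unbounded there would stay unbounded in $V[G]$, contradicting Hurewiczness in $V[G]$. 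Thus the bookkeeping handles $\F_\cU$ at some stage $\beta\ge\alpha$, a $\gamma$-subcover of $\cU$ is added, and $X$ is a $\gamma$-space in $V[G]$. I expect the verification that the preservation theorem genuinely applies — that properness together with almost $\w^\w$-boundedness is exactly the iterable hypothesis at every stage — to be the technically delicate step.
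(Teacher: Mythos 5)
Your proposal is correct in substance and shares the paper's skeleton: a length-$\w_2$ bookkeeping iteration of Mathias posets $\mathbb M_\F$ for filters that are Hurewicz at their stage, Theorem~\ref{main2} to guarantee the iterands preserve unbounded families, and the reduction of the $\gamma$-space problem to finding pseudointersections of filters with $\w_1$-sized Hurewicz bases via the sets $C_x=\{n\colon x\in U_n\}$. But your implementation genuinely differs in the iteration technology, and the paper's choices are deliberately lighter. First, the paper uses a \emph{finite} support iteration of c.c.c.\ ($\sigma$-centered) posets and keeps $\bb=\w_1$ cheaply: successor steps preserve unbounded families because the iterands are almost $\w^\w$-bounding, while limit stages preserve \emph{well-ordered} $\leq^*$-unbounded families by the elementary \cite[Lemma~6.5.7]{BarJud95}; a single ground-model $\w_1$-scale (available under CH) then witnesses $\bb=\w_1$, with no deep preservation theorem needed. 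You instead use countable supports and lean on Shelah's preservation theorem for proper almost $\w^\w$-bounding iterands; that machinery exists, but as you yourself flag, verifying that almost $\w^\w$-boundedness is the iterable hypothesis --- and that it applies to \emph{tails}, which are not literally countable support iterations over the intermediate model --- is exactly the delicate step your argument rests on. Second, your reflection of Hurewiczness to intermediate models goes through ``the tail is almost bounding plus the bounded-continuous-images characterization''; the paper's reflection needs no preservation at all: since the base $\B$ is a fixed $\w_1$-sized set of reals and a Hurewicz witness (a sequence of finite selections whose unions form a $\gamma$-cover of $\B$) is a countable object whose correctness is absolute, one simply catches witnesses along an $\w_1$-club of stages, using the $\w_2$-c.c.\ and CH at intermediate stages. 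Third, your pullback lemma is a nice local simplification: by pulling back only the canonical covers by sets $\uparrow q$, whose preimages $\bigl(\bigcap_{n\in q}U_n\bigr)^k$ are open in $X^k$ regardless of whether the $U_n$ are clopen, you bypass the paper's zero-dimensionality/clopen-refinement step preceding the definition of $\psi$.

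One detail you gloss over in the key lemma: since you put the cofinite sets into $\F_\cU$, its elements need only contain some $\bigcap_{i<k}C_{x_i}$ \emph{modulo a finite set}, and a selection whose unions form a $\gamma$-cover catching each $\bigcap_{i<k}C_{x_i}$ (hence all its supersets, as the sets $\uparrow q$ are upward closed) need not catch a superset of $\bigcap_{i<k}C_{x_i}\setminus n$ when the relevant $q$ meets $n$. This is repaired by writing $\F_\cU$ as the countable union over $n\in\w$ of the superset-closures of $\left\{\bigcap_{i<k}C_{x_i}\setminus n\colon k\in\w,\ \bar x\in X^k\right\}$, running your pullback for each such piece (covers of the piece come from $q$ disjoint from $n$, so the preimages are again open covers of $X^k$), and folding $n$ into the same diagonalization you already perform over $k$, using countable additivity of the Hurewicz property. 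This is a fixable omission, not a flaw in the approach.
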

\begin{proof}
Using Theorem~\ref{main2}, a standard book-keeping argument taking
care of all filters $\F$ on $\w$  having a Hurewicz base $\B$ of
size $\w_1$, and the well-known fact that unbounded well-ordered by
$\leq^*$ subfamilies of $\w^\w$ are preserved at limit stages of
finite support iterations of c.c.c. posets (see, e.g.,
\cite[Lemma~6.5.7]{BarJud95}), we can perform an $\w_2$ steps finite
support iteration $\IP_{\w_2} =
\left\la\IP_\alpha,\name{\IQ}_\alpha\colon\alpha< \w_2\right\ra$
 of
c.c.c.\,posets such that in $V^{\IP_{\w_2}}$ the following holds:
\begin{enumerate}[(i)]
 \item[$(i)$] $\bb=\w_1$;
\item[$(ii)$] Every filter $\F$ on $\w$ has a pseudointersection
provided it has a Hurewicz base $\B$ of size $\w_1$.
\end{enumerate}
Here we have to use the observation that filters $\F$ as those in item
$(ii)$ above are Hurewicz (being a continuous image of
$\B\times\mathcal P(\w)$),  and the fact that if $\B$ of size $\w_1$
has the Hurewicz property in $V^{\IP_{\w_2}}$ then there exists an
$\w_1$-club $C\subset\w_2$ such that $\B\in V^{\IP_\alpha} $ and
$\B$ is Hurewicz in $V^{\IP_\alpha}$ for all $\alpha\in C$.

Now suppose that in $V^{\IP_{\w_2}}$ we have a Tychonov space  $X$
of size $\w_1$ such that all finite powers of $X$ are Hurewicz. Let
$\U$ be an $\w$-cover of $X$.  $X$ is zero-dimensional because
$|X|<2^\w$,
 and hence
passing to a refinement of $\U$, if necessary, we may assume that
$\U$ consists of clopen sets. Applying \cite[Proposition,
p.\,156]{GerNag82} we can find a countable $\V=\left\{U_n\colon
n\in\w\right\}\subset\U$ which is an $\w$-cover of $X$. Now consider
the map $\psi\colon X\to\mathcal P(\w)$, $\psi\colon x\mapsto
\left\{n\in\w\colon x\in U_n\right\}$. It follows from the above
that  $\psi$ is continuous and $\psi[X]$ is centered. Since all
finite powers of $X$ are Hurewicz, such are also all finite powers
of $\psi[X]$, and hence also all finite powers of
$\B=\left\{\bigcap\Y\colon  \Y\in\left[\psi[X]\right]^{<\w}\right\}
$ are Hurewicz as well because the latter is a countable union of
continuous images of finite powers of $\psi[X]$. Thus the filter
$\left\la \psi[X]\right\ra$ has the Hurewicz base $\B$ of size
$\left|\psi[X]\right|\leq |X|=\w_1$, and consequently  it has a
pseudointersection $J\in [\w]^\w$ by $(ii)$ above. Therefore
$J\subset^*\psi(x)$ for all $x\in X$, which means that
$\left\{U_n\colon n\in J\right\}$ is a $\gamma$-cover of $X$. This
completes the proof.
\end{proof}

Finally, we shall show that  $\hot p=\hot b<\hot
c$ holds in any model
of Corollary~\ref{hur_imp_gamma} (and hence also in those of Theorem~\ref{main_tech}).

\begin{observation} \label{p_less_b}
If $\mathfrak{p}<\mathfrak b$, then there exists a $\mathfrak b$-scale set
which is not a $\gamma$-space.
\end{observation}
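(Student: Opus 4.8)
The plan is to show that $Y=S\cup[\omega]^{<\omega}$ fails the $\gamma$-property by exhibiting a single countable open $\omega$-cover of $Y$ that admits no $\gamma$-subcover; this suffices, since a $\gamma$-space must furnish a $\gamma$-subcover of every open $\omega$-cover. The mechanism I would use is the standard duality between covers and families of subsets of $\omega$: given a continuous map $\Psi\colon Y\to\mathcal P(\omega)$, the clopen sets $U_n=\{y\in Y\colon n\in\Psi(y)\}$ form an $\omega$-cover of $Y$ exactly when $\Psi[Y]$ has the strong finite intersection property, and a $\gamma$-subcover $\{U_{n_k}\}$ corresponds precisely to an infinite $N=\{n_k\}$ with $N\subseteq^*\Psi(y)$ for every $y$, i.e. to a pseudointersection of $\Psi[Y]$ (one discards the finitely many $n$ lying in the necessarily finite intersection $\bigcap\Psi[Y]$, so that no $U_n$ equals $Y$). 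Thus it is enough to construct the scale $S$ together with a continuous $\Psi$ for which $\Psi[Y]$ has the strong finite intersection property but no pseudointersection.

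To feed in the hypothesis $\mathfrak{p}<\mathfrak{b}$, fix a family $\mathcal A=\{A_\xi\colon\xi<\mathfrak{p}\}\subseteq[\omega]^\omega$ with the strong finite intersection property and no pseudointersection. I would split $\omega=E\sqcup C$, use $E$ to carry the actual scale and a very sparse $C=\bigsqcup_n J_n$ (with the blocks $J_n$ placed at extremely fast-growing positions) to code $\mathcal A$, and take $\Psi(A)=\{n\colon A\cap J_n=\emptyset\}$, which is continuous. The first $\mathfrak{p}$ scale elements should code $\mathcal A$: set $s_\xi=\sigma_\xi\cup P_\xi$, where $P_\xi\subseteq C$ meets exactly the blocks $J_n$ with $n\notin A_\xi$, so that $\Psi(s_\xi)=A_\xi$; for $\alpha\ge\mathfrak{p}$ take $s_\alpha\subseteq E$, whence $\Psi(s_\alpha)=\omega$; and every finite set is sent to a cofinite set. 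Then $\Psi[Y]=\mathcal A\cup\{\omega\}\cup\{\text{cofinite sets}\}$ retains the strong finite intersection property, while any pseudointersection of $\Psi[Y]$ would in particular be a pseudointersection of $\mathcal A$ --- so there is none, and $Y$ is not a $\gamma$-space.

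It remains to guarantee that $S=\{s_\alpha\colon\alpha<\mathfrak{b}\}$ is a genuine $\mathfrak{b}$-scale, i.e. that the enumerating functions $e_{s_\alpha}$ are $\le^*$-increasing and unbounded, and this reconciliation of the scale requirement with the coding is where I expect the real work to lie. The tension is that $\mathcal A$ carries no $\le^*$-structure, whereas the coding elements, being members of a scale, must be $\le^*$-comparable; this is precisely why $\Psi$ is taken non-monotone (a block-trace), so that an unordered image is compatible with ordered preimages. Concretely I would build the coding elements by recursion on $\xi<\mathfrak{p}$: since $\{e_{s_\eta}\colon\eta<\xi\}$ has size $<\mathfrak{b}$ it is bounded, and the $E$-part $\sigma_\xi$ can be chosen sparse enough to dominate that bound; the only danger is that $P_\xi$ adds too many points and lowers the enumeration, but because $C$ is so sparse that every initial segment meets only few blocks $J_n$, the contribution of $P_\xi$ there is negligible and the enumeration is governed by the freely chosen $\sigma_\xi$. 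Finally, as the coding family is bounded, the unbounded $\le^*$-increasing scale placed on $E$ for $\alpha\ge\mathfrak{p}$ can be started above it, so the whole sequence splices into a $\mathfrak{b}$-scale. The \emph{main obstacle}, then, is this simultaneous bookkeeping --- ensuring monotonicity and unboundedness of the enumerating functions while a fixed continuous map reads off the prescribed unordered family $\mathcal A$ --- rather than any single conceptual step.
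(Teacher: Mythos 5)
Your first half is sound: the translation between countable clopen $\omega$-covers of $Y$ and continuous maps $\Psi\colon Y\to\mathcal P(\omega)$ with centered image, and the observation that a $\gamma$-subcover is exactly a pseudointersection of $\Psi[Y]$, is the same mechanism the paper uses in its opening line (a centered family without a pseudointersection is not a $\gamma$-space, witnessed by the cover $O_n=\{x\colon n\in x\}$). The gap is in the second half, precisely at the step you flagged but then dismissed. Your claim that ``the contribution of $P_\xi$ is negligible and the enumeration is governed by the freely chosen $\sigma_\xi$'' is wrong in the direction that matters: since $P_\xi\subseteq s_\xi$, adding points can only slow the enumeration, so $e_{s_\xi}(n)\le e_{P_\xi}(n)$ pointwise. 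Thus $e_{P_\xi}$ is a hard \emph{upper} cap on $e_{s_\xi}$ that no choice of $\sigma_\xi$ can overcome, and this cap is dictated by the fixed block positions together with $A_\xi$: taking $P_\xi$ minimal, $e_{P_\xi}(n)$ is at most the position of the $(n+1)$-st block whose index lies in $\omega\setminus A_\xi$. Since $\mathcal A$ carries only the strong finite intersection property, $\omega\setminus A_\xi$ may be thick, in which case $e_{P_\xi}(n)\le\min J_{2n}$, say. Meanwhile, your recursion instructs you to choose $\sigma_\eta$ ``sparse enough to dominate'' the bound of the previous family, and nothing caps that choice from above; once some earlier stage produces $e_{s_\eta}(n)>\min J_{2n}$ for all large $n$, no later coding element with thick $\omega\setminus A_\xi$ can satisfy $s_\eta\le^* s_\xi$, and the $\le^*$-ordering of the scale breaks irreparably. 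In short, the recursion never secures the necessary condition $e_{s_\eta}\le^* e_{P_\xi}$ for all $\eta<\xi$, and as described it can paint itself into a corner.

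The construction is repairable, but it needs an idea you did not supply: maintain the invariant that every coding enumeration stays $\le^*$ the fixed function $n\mapsto\min J_n$ (this is coherent: a family of size $<\mathfrak b$ of functions $\le^*m$ has an upper bound $h$, and $\min(h,m)$ is again an upper bound below $m$), or equivalently build the bounded $\le^*$-increasing chain of length $\mathfrak p$ first --- it is bounded because $\mathfrak p<\mathfrak b$ --- and only then place the blocks above its bound; even so, the order of the merged sets $\sigma_\xi\cup P_\xi$ still requires checking. It is instructive to compare with the paper's proof, which sidesteps this bookkeeping entirely: it starts from an \emph{already existing} $\mathfrak b$-scale $\{s_\alpha\}$ with all values divisible by $3$ and codes the non-$\gamma$ set $X=\{x_\alpha\colon\alpha<\mathfrak p\}$ by a $+1$ parity perturbation, $t_\alpha(n)\in\{s_\alpha(n),s_\alpha(n)+1\}$ according to whether $n\in x_\alpha$. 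The spacing by $3$ makes the perturbation invisible to the $\le^*$-order, so the scale structure is inherited verbatim; the coding part $T_0$ is closed in $T\cup[\omega]^{<\omega}$ because it is trapped under $s_{\mathfrak p}$ inside a compact set, and the parity readout maps $T_0$ continuously onto $X$. Your block-trace $\Psi$ is a legitimate variant of that parity readout, but the perturbation-of-an-existing-scale device is what makes the order preservation trivial, and it is exactly the ingredient your recursion is missing.
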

\begin{proof}
It is easy to see that any centered subset $X$ of $[\w]^\w$
without a pseudointersection is not a $\gamma$-space: consider the open
$\w$-cover $\{O_n:n\in\w\}$ of $X$, where $O_n=\{x:n\in x\}$.
Thus there exists $X\subset [\w]^\w$
of size $\mathfrak p$
which is not a $\gamma$-space (this fact has been attributed to \cite{GalMil84} in \cite{COC2}).
Now let $\{s_\alpha:\alpha<\mathfrak b\}$ be a $\mathfrak b$-scale such that
$3$ divides $s_\alpha(n)$ for all $\alpha,n$. Since $s_\alpha<^*s_{\mathfrak p}$
for all $\alpha<\mathfrak p$ and $\mathfrak p$ is regular, there exists $n$ such that the set
$I_n=\{\alpha:s_\alpha(m)<s_{\mathfrak p}(m)$ for all $m\geq n\}$ has size $\mathfrak p$.
Without loss of generality we may assume that $n=0$, otherwise just redefine
$s_\alpha(k)$ to be equal to $3k$ for all $\alpha\in I_n$ and $k<n$ and note that
resulting functions still form a $\mathfrak b$-scale. Also, we can additionally assume
that $I_0=\mathfrak p$ because otherwise we can set $s'_\alpha=s_{\xi_\alpha}$,
where $\xi_\alpha$ is the $\alpha$th element of $I_0$, and consider the
$\mathfrak b$-scale $\{s'_\alpha:\alpha<\mathfrak p\}\cup\{s_\alpha:\alpha\geq\mathfrak p\}$.

Let us write  $X$ in the form $\{x_\alpha:\alpha<\mathfrak p\}$
and set $t_\alpha(n)$ to be the  even (resp. odd) element in the set $\{s_\alpha(n),s_\alpha(n)+1\}$
if $n\in x_\alpha$ (resp. if $n\not\in x_\alpha$). It follows that $t_\alpha \leq^* s_\beta $
for all $\alpha<\mathfrak p$ and $\beta\geq \mathfrak p$, and
$t_\alpha \leq^* t_\beta $ for all $\alpha<\beta<\mathfrak p$. Thus
$T:=\{t_\alpha:\alpha<\mathfrak p\}\cup\{s_\alpha:\alpha\geq\mathfrak p\}$
is a $\mathfrak b$-scale. Moreover, $T_0:=\{t_\alpha:\alpha<\mathfrak p\}$
is a closed subset of $T\cup [\w]^{<\w}$ because
$T_0= T\cap K $ for the compact subset $K=\{a\in [\w]^\w:a(n)\leq s_{\mathfrak p}(n)$ for all $n\}$
of $\mathcal P(\w)$. Since $T_0$ can be continuously mapped onto $X$
(using the parity), it is not a $\gamma$-space, and hence $T\cup [\w]^{<\w}$
also fails to be a $\gamma$-space because this property is inherited by closed subspaces.
\end{proof}

\section{Turning sets of reals into Hurewicz spaces
 without adding dominating reals} \label{good_applications}

It has been proven in \cite{SchTal10} that after adding $\w_1$-many
Cohen reals any set of ground model reals becomes Menger. The same
argument proves that after iterating with finite supports
c.c.c.\,posets adding dominating reals uncountably many steps, each set of
ground model reals becomes Hurewicz. The natural question which
arises here is which sets of reals can be made Hurewicz by a forcing
not adding dominating reals.

 A subset $Y$ of $\w^\w$ is said to be \emph{finitely dominating} if
the set $\{\mathrm{maxfin}(F)\colon F\in [Y]^{<\w}\}$ is dominating, where
$\mathrm{maxfin}(F)$ is the coordinatewise maximum of $F$.
 It was shown in \cite{Tsa01-02} that a
space $X\subset\w^\w$ has the Scheepers property if and only if any
continuous image $Y\subset\w^\w$ of $X$ is not finitely dominating.
It is clear that if a finitely dominating set $Y$ becomes bounded in
$V^\IP$ for some poset $\IP$, then $\IP$ adds a dominating real. It
is a classical result of Hurewicz \cite[Theorem~4.3]{COC2} that
$X\subset\w^\w$ is Hurewicz if and only if all its continuous images
$Y\subset\w^\w$ are bounded. Thus if a non-Scheepers subspace $X$ of
$\w^\w$ becomes Hurewicz in $V^\IP$ for certain $\IP$, then $\IP$
adds a dominating real. We do not know whether the converse
implication is true.

\begin{question} \label{q1}
Let $X\subset\w^\w$ be a Scheepers space. Is there a (c.c.c.) poset
$\IP$ which does not add dominating reals and is such that $X$ is
Hurewicz in $V^{\IP}$?
\end{question}

The following result may be
thought of as a  step towards answering  Question~\ref{q1}.

\begin{theorem} \label{turn_into_hur}
Let $\G$ be a  Menger filter. Then there exists a c.c.c.\,poset
$\IP_\G$ which does not add dominating reals and is such that the
filter $\G'=\bigcup_{G\in\G}\uparrow G$ generated by $\G$ in
$V^{\IP_\G}$ is Hurewicz in $V^{\IP_\G}$.
\end{theorem}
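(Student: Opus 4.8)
The plan is to let $\IP_\G$ be the finite-support iteration $\langle \IP_\alpha,\dot{\IQ}_\alpha:\alpha<\w_1\rangle$ in which $\dot{\IQ}_\alpha=\mathbb M_{\dot{\G}'_\alpha}$, where $\dot{\G}'_\alpha$ is a name for the filter $\bigcup_{G\in\G}\uparrow G$ generated by $\G$ in $V^{\IP_\alpha}$. Each iterand is $\sigma$-centered, since two conditions sharing a stem are compatible, so $\IP_\G$ is ccc as a finite-support iteration of ccc posets. Mathias forcing is the natural building block because of the two halves of the paper: by Theorem~\ref{main} each $\mathbb M_{\dot{\G}'_\alpha}$ adds no dominating real over $V^{\IP_\alpha}$ once we know $\dot{\G}'_\alpha$ is Menger there, while the construction in the second part of Theorem~\ref{main2} shows that the single generic pseudointersection added at stage $\alpha$ simultaneously bounds every continuous image of $\dot{\G}'_\alpha$ arising from a cover sequence already present at stage $\alpha$. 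Running this for $\w_1$ steps is meant to catch every such cover sequence by reflection.

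For the Menger bookkeeping I would first note that the regenerated filter is never harder to handle than $\G$ itself: the map $\psi(F,X)=F\cup X$ continuously sends $\G\times\mathcal P(\w)$ onto $\G'_\alpha=\bigcup_{F\in\G}\uparrow F$, and since $\mathcal P(\w)$ is compact this makes $\G'_\alpha$ Menger exactly when $\G$ is Menger in $V^{\IP_\alpha}$. Thus the whole problem of keeping the iterands tame reduces to showing, by induction on $\alpha$, that $\G$ stays Menger along the iteration, which I would run together with the unboundedness statement of the last paragraph as a single simultaneous induction.

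The Hurewicz conclusion then comes by reflection. Suppose $\G'$ failed to be Hurewicz in $V^{\IP_{\w_1}}$. By Claim~\ref{clear_cover_1} a witness may be taken to be a sequence of covers by sets $\uparrow q$ with $q\in[\w]^{<\w}$, i.e. a single real, so by ccc and $\cf(\w_1)=\w_1$ it already lies in $V^{\IP_\alpha}$ for some $\alpha<\w_1$. Writing $x_F(n)=\min\{m:q_m(n)\subseteq F\}$ as in Theorem~\ref{main2}, I would use that $x_F\leq x_{G_0}$ pointwise whenever $F\supseteq G_0$ with $G_0\in\G$, so that bounding the image $\{x_F:F\in\G'\}$ reduces to bounding $\{x_{G_0}:G_0\in\G\}$. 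The generic added by $\dot{\IQ}_\alpha$ yields, exactly as in the proof of Theorem~\ref{main2}, a function $g$ with $x_F\leq^* g$ for all $F\in\G'_\alpha\supseteq\G$; this $g$ survives to $V^{\IP_{\w_1}}$ and bounds the whole image, which converts into a $\gamma$-cover selection $\V_n=\{\uparrow q_m(n):m\leq g(n)\}$ and contradicts the failure of Hurewicz.

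The step I expect to be the real obstacle is showing that $\IP_\G$ adds no dominating real over $V$. Since every real of $V^{\IP_{\w_1}}$ appears at some stage $<\w_1$, it is enough to keep a fixed ground-model $\bb$-scale $S$ unbounded throughout. The delicate point is that ``adds no dominating real'' is not transitive along the iteration: knowing only that each $\mathbb M_{\G'_\alpha}$ adds no dominating real over the intermediate model $V^{\IP_\alpha}$ does not rule out that it dominates the smaller, original family $S\subseteq V\cap\w^\w$. At limit stages the unboundedness of $S$ is preserved by the standard theorem for $\leq^*$-well-ordered unbounded families under finite-support iterations \cite[Lemma~6.5.7]{BarJud95}; the successor step is where I expect the essential work, namely a lemma that the Mathias real added by a \emph{Menger} filter cannot dominate $S$. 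This is strictly stronger than ``no dominating real'' and strictly weaker than ``almost $\w^\w$-bounding'' (which would force $\G'_\alpha$ to be Hurewicz, precisely what we do not have), so it must be extracted from the Menger property of $\G'_\alpha$ directly — presumably via the Menger game on $\G'_\alpha$ in the style of Proposition~\ref{stron_f_plus} — and run in tandem with the parallel preservation-of-Menger lemma for $\G$ from the second paragraph. This is where I expect the bulk of the technical work to lie.
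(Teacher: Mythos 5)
There is a genuine gap, and you have located it yourself: the two lemmas you defer to ``the bulk of the technical work'' are exactly where the proof has to live, and at least the first of them is in serious doubt. Your successor step requires that $\mathbb M_{\G'_\alpha}$, for a filter $\G'_\alpha$ that is merely Menger in $V^{\IP_\alpha}$, preserve the unboundedness of one fixed family $S\subseteq V\cap\w^\w$. But by the lemma in the introduction, preserving \emph{all} unbounded families of the intermediate model is equivalent to being almost $\w^\w$-bounding, and by Theorem~\ref{main2} this holds for $\mathbb M_{\G'_\alpha}$ exactly when $\G'_\alpha$ is Hurewicz --- which is precisely what you cannot assume. So for a Menger non-Hurewicz $\G'_\alpha$ there \emph{is} some unbounded family over $V^{\IP_\alpha}$ that stage $\alpha$ bounds, and nothing in the Menger property (or in the Menger game argument you gesture at) rules out that $S$ is such a family; no such lemma appears in the paper, and it is not clear it is true. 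The companion induction hypothesis, that $\G$ \emph{stays} Menger in every intermediate model $V^{\IP_\alpha}$, is a second unproved preservation statement of the same difficulty; the paper never establishes preservation of Mengerness under forcing and in fact organizes its proof so as never to need it. (A small additional slip: your ``exactly when'' for $\G'_\alpha$ Menger versus $\G$ Menger is unjustified in the backward direction, since Menger is inherited only by closed subspaces and $\G$ need not be closed in $\G'_\alpha$; only the forward direction, via $\psi(F,X)=F\cup X$, is available --- though only that direction is needed.)

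The paper's actual proof is designed to sidestep both obstacles. It forces with the finite-support \emph{product} $\IP_\G=\prod_{\alpha<\w_1}\mathbb M_\G$ of copies of the ground-model Mathias poset, not an iteration with regenerated filters. Claim~\ref{filter_powers} shows all finite powers $\G^n$ are Menger in $V$ (a ground-model fact, no preservation required); Lemma~\ref{prod_imp} then shows directly over $V$ that each finite product of Mathias posets for filters with Menger product adds no dominating real; and Corollary~\ref{cor_aux1} reduces, via ccc, to countable subproducts, whose initial segments are finite products, so that \cite[Lemma~6.5.7]{BarJud95} finishes the non-domination at limits. Because the non-domination of every initial segment is proved over $V$ at one stroke, no successor-step lemma about old unbounded families and no intermediate-model Menger statements are ever needed. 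The Hurewicz conclusion is also reached differently: instead of reflecting a failing cover sequence and bounding its image (your endgame, which is essentially the second half of Theorem~\ref{main2} relativized and is salvageable in spirit), the paper uses the characterization \cite[Theorem~5.7]{COC2}: any $G_\delta$ set $R\supseteq\G'$ is coded (Claim~\ref{clear_cover_1}) by a sequence $\la A_n\colon n\in\w\ra$ of $(\G^{<\w})^+$-sets appearing at some stage $\alpha<\w_1$ by ccc, and the single generic pseudointersection $X_\alpha$, being generic over that stage, yields the $\sigma$-compact interpolant $\bigcup_{n\in\w}\uparrow(X_\alpha\setminus n)$ between $\G'$ and $R$. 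Without repairing your two preservation steps the proposal does not go through, and the product trick is precisely what makes those steps unnecessary.
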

\begin{proof}
We shall divide the proof into a sequence of auxiliary statements.
\begin{lemma} \label{prod_imp}
Let $n\in\w$ and  $\F_i$ be a filter for all  $i\in n$. If
$\prod_{i\in n}\F_i$ is Menger, then $\IP=\prod_{i\in n}\mathbb
M_{\F_i}$ does not add a dominating real.
\end{lemma}
\begin{proof}
The proof will be similar to that of  the ``if'' part  of
Theorem~\ref{main2}. However, we shall present it for the sake of completeness.

 Suppose, to the contrary, that $\name{g}$ is a
$\IP$-name for a dominating function. An element of $\IP$ may be
naturally identified with a sequence $\left\la \vec{s},\vec{F}\right\ra$, where
$\vec{s}=\left\la s(i)\colon i\in n\right\ra\in \left([\w]^{<\w}\right)^n$ and
$\vec{F}=\la F(i)\colon i\in n\ra\in \prod_{i\in n}\F_i$, and $\la s(i),
F(i)\ra\in\mathbb M_{\F_i}$ for all $i$.

For every $f\in\w^\w$ let us find  $n_f\in\w$ and a condition
$\left\la\vec{s}^{\,f},\vec{F}^f\right\ra$  forcing
$f(n)<\name{g}(n)$ for all $n\geq n_f$. Since $\w^\w$ cannot be
covered by a countable family of non-dominating sets, we may assume
that $\vec{s}^{\,f}$ and $n_f$ do not depend on $f$, i.e.,
$\vec{s}^{\,f}=\vec{s}_*$ and $n_f=n_*$ for all $f\in\w^\w$.

For every $m\in\w$ let $\mathcal S_m$ be the set of those
$\vec{s}=\la s(i)\colon i\in n\ra\in \left([\w]^{<\w}\right)^n$ such that $\max
s_*(i)<\min s(i)$ for all $i$, and there exists $\vec{F}_{\vec{s}}$
such that $\big\la\la s_*(i)\cup s(i)\colon i\in n\ra,
\vec{F}_{\vec{s}}\big\ra$ forces  $\name{g}(m)$ to be equal to some
$g_{\vec{s}}(m)$. It is clear that for every $\vec{F}\in\prod_{i\in
n}\F_i$ there exists $\vec{s}\in\mathcal S_m$ such that $s(i)\subset
F(i)$ for all $i\in n$. In other words,
\[ \U_m:=\left\{\prod_{i\in n}\uparrow s(i)\colon \la s(i)\colon i\in n\ra\in \mathcal S_m\right\} \]
is an open cover of $\prod_{i\in n}\F_i$. Since the latter product
is Menger, for every $m$ there exists a finite $\V_m\subset\U_m$
such that $\prod_{i\in n}\F_i\subset\bigcup_{m\geq l}\bigcup\V_m$ for
all $l\in\w$. Let $\mathcal T_m\in \left[\mathcal S_m\right]^{<\w}$ be such
that $\V_m=\left\{\prod_{i\in n}\uparrow s(i)\colon \la s(i)\colon i\in n\ra\in
\mathcal T_m\right\}$ and $f(m)=\max\left\{g_{\vec{s}}(m)\colon \vec{s}\in\mathcal
T_m\right\}$. Let also $m\geq n_*$ be such that $\vec{F}^f\in\prod_{i\in
n}\uparrow s(i)$ for some $\la s(i)\colon i\in n\ra\in \mathcal T_m$. It
follows that $s(i)\subset F^f(i)$ for all $i\in n$, and hence $$
p:=\big\la \la s_*(i)\cup s(i)\colon i\in n\ra, \la F^f(i)\cap
F_{\vec{s}}(i)\colon i\in n\ra  \big\ra $$ is a condition in $\IP$
stronger than both $\left\la\vec{s}_*, \vec{F}^f\right\ra$ and $\left\la\la
s_*(i)\cup s(i)\colon i\in n\ra, \vec{F}_{\vec{s}}\right\ra$. However,
$\left\la\la s_*(i)\cup s(i)\colon i\in n\ra, \vec{F}_{\vec{s}}\right\ra$ forces
$\name{g}(m)=g_{\vec{s}}(m)\leq f(m)$, whereas $\left\la\vec{s}_*,
\vec{F}^f\right\ra$ forces $\name{g}(m)>f(m)$, a contradiction.
\end{proof}

\begin{corollary}\label{cor_aux1}
Let $\mathscr F$ be a collection of filters on $\w$.  If $\prod_{i\in
n}\F_i$ is Menger for any $n\in\w$ and $\la \F_i\colon i\in n\ra\in\mathscr
F^n$, then $\IP=\prod_{\F\in \mathscr F}\mathbb M_{\F}$ with finite
supports  does not add a dominating real.
\end{corollary}
\begin{proof}
Since $\IP$ is c.c.c.\,\cite[Theorem 15.15]{Jech03}, if $\IP$ added a
dominating real then there would exist a countable $\mathscr F'\in
[\mathscr F]^\w$ such that $\IP'=\prod_{\F\in \mathscr F'}\mathbb
M_{\F}$ adds a dominating real. The latter product may be viewed as
a finite support iteration whose initial segments are equivalent to
finite products of Mathias posets with respect to elements of
$\mathscr F'$, and hence these initial segments preserve $V\cap\w^\w$
unbounded by our assumption. But then by  \cite[Lemma
6.5.7]{BarJud95} we have that $V\cap\w^\w$ is unbounded in
$V^{\IP'}$ as well, a contradiction.
\end{proof}

In general it is a notorious open question whether it is consistent
that  the Menger property is preserved by finite products. The
following simple statement gives the  answer in the case of  filters.

\begin{claim}\label{filter_powers}
Let $\F$ be a Menger (Hurewicz) filter. Then all finite powers of
$\F$ are Menger (Hurewicz).
\end{claim}
\begin{proof}
Let us fix $n\in\w$ and consider the  map $\phi\colon \F\times \mathcal
P(\w)^n\to\mathcal P(\w)^n$ assigning to $\la F;A_0,\ldots,
A_{n-1}\ra$ the sequence $\la F\cup A_0,\ldots, F\cup A_{n-1}\ra$.
It is clear that the range of $\phi$ is $\F^n$. Since the Menger
(Hurewicz) property is preserved by products with compact spaces and
continuous images, we conclude that $\F^n$ is Menger (Hurewicz).
\end{proof}

Set $\IP_\G=\prod_{\alpha\in\w_1}\mathbb M_{\G}$ with finite
supports and let $\vec{X}=\left\la X_\alpha\colon \alpha<\w_1 \right\ra$ be the
sequence of generic reals added by $\IP_\G$. By
Claim~\ref{filter_powers} and Corollary~\ref{cor_aux1} we have that
$\IP_\G$ does not add dominating reals.
  Let
$R=\bigcap_{n\in\w} O_n\in V[\vec{X}]$ be a $G_\delta$ subset
containing $\G'$. By Claim~\ref{clear_cover_1} we may assume that
$O_n=\bigcup_{a\in A_n}\uparrow a$ for some $A_n\in \left(\G^{<\w}\right)^+$.
Let $\alpha\in\w_1$ be such that $\la A_n\colon n\in\w\ra\in V[\la
X_\xi\colon \xi<\alpha\ra]$. Since $X_\alpha$ is generic over $V[\la
X_\xi\colon \xi<\alpha\ra]$, for every $n\in\w$ there exist infinitely
many $a\in A_n$ such that $a\subset X_\alpha$. In other words,
$\bigcup_{n\in\w}\uparrow (X_\alpha\setminus n)\subset R$. On the
other hand, $\G'\subset \bigcup_{n\in\w}\uparrow (X_\alpha\setminus
n)$. Thus we have found a $\sigma$-compact set containing $\G'$ and
contained in $R$. By \cite[Theorem~5.7]{COC2} this completes our
proof.
\end{proof}

We do not know whether $\G$ itself becomes Hurewicz in the forcing
extension by~$\IP_\G$.
\medskip

\noindent\textbf{Remark 5.6.} Let $\mathscr F$ be a family of filters satisfying the
premises of Corollary~\ref{cor_aux1}. The proof of Theorem~\ref{turn_into_hur}
actually allows us fo find a poset $\IP$ which does not add dominating reals and such that
$\uparrow \F$ is Hurewicz in $V^{\IP}$ for all $\F\in\mathscr F$.

\end{document}